\theoremstyle{definition}
\newtheorem{thm}{Theorem}[section]
\newtheorem{defn}[thm]{Definition}
\newtheorem{prop}[thm]{Proposition}
\newtheorem{rem}[thm]{Remark}
\newtheorem{cor}[thm]{Corollary}
\renewcommand{\bf}{\mathbf f}
\newcommand{\mbfy}{\mathbf y}
\newcommand{\bh}{\mathbf h}
\newcommand{\bn}{\mathbf n}
\newcommand{\bx}{\mathbf x}
\newcommand{\bs}{\mathbf s}
\newcommand{\bg}{\mathbf g}
\newcommand{\bA}{\mathbf A}
\newcommand{\bC}{\mathbf C}
\newcommand{\bM}{\mathbf M}
\newcommand{\bN}{\mathbf N}
\newcommand{\bB}{\mathbf B}
\newcommand{\bS}{\mathbf S}
\newcommand{\bT}{\mathbf T}
\newcommand{\bD}{\mathbf D}
\newcommand{\bR}{\mathbf Q_{\by^*}}
\newcommand{\bI}{\mathbf I}
\newcommand{\bG}{\mathbf G}
\newcommand{\bJ}{\mathbf J}
\newcommand{\bzero}{\mathbf 0}
\newcommand{\bv}{\mathbf v}
\newcommand{\by}{\mathbf y}
\newcommand{\bw}{\mathbf w}
\newcommand{\bU}{\mathbf U}
\newcommand{\bV}{\mathbf V}
\newcommand{\bu}{\mathbf u}
\newcommand{\bssigma}{\boldsymbol{\sigma}}
\newcommand{\R}{\mathbb R}
\newcommand{\C}{\mathbb C}
\newcommand{\tm}{\subseteq}
\newcommand{\abs}[1]{\lvert #1\rvert}
\newcommand{\norm}[1]{\lVert #1\rVert}
\newcommand{\be}{\mathbf e}
\newcommand{\qta}{\quad\text{ and }\quad}
\renewcommand{\R}{\mathbb{R}}
\newcommand{\N}{\mathbb{N}}
\renewcommand{\Vec}[1]{\renewcommand*{\arraystretch}{1}\begin{pmatrix*}[r]#1\end{pmatrix*}}
\newcommand{\from}{\colon}
\newcommand{\ii}{\mathrm{i}}
\DeclareMathOperator{\diag}{diag}
\DeclareMathOperator{\Span}{span}
\DeclareMathOperator{\im}{Im}
\newcommand{\vast}{\bBigg@{3}}
\newcommand{\Vast}{\bBigg@{4}}
\newcommand{\vastl}{\mathopen\vast}
\newcommand{\vastr}{\mathclose\vast}
\newif\ifcenter@asb@\center@asb@false
\def\center@arstrutbox{%
	\setbox\@arstrutbox\hbox{$\vcenter{\box\@arstrutbox}$}%
}
\newcommand*{\CenteredArraystretch}[1]{%
	\ifcenter@asb@\else
	\pretocmd{\@mkpream}{\center@arstrutbox}{}{}%
	\center@asb@true
	\fi
	\renewcommand{\arraystretch}{#1}%
}
\definecolor{colorA}{rgb}{0,0.447,0.741}
\definecolor{colorB}{rgb}{0.85,0.325,0.098}
\definecolor{colorE}{rgb}{0.929,0.694,0.125}
\definecolor{colorF}{rgb}{0.494,0.184,0.556}
\definecolor{colorD}{rgb}{0.466,0.674,0.188}
\definecolor{colorC}{rgb}{0.301,0.745,0.933}
\definecolor{colorG}{rgb}{0.635,0.078,0.184}
\newlength{\tickl}    
\tikzset{axes/.style={thick,-latex}}
\tikzset{lineplot/.style={thick}}
\tikzset{arrow/.style={thick,-latex}} 
\tikzset{thick arrow/.style={ultra thick,-latex}} 
\tikzset{grid lines/.style={very thin,gray!30}}	
\tikzset{point/.style={radius=2pt}}
\tikzset{help line/.style={black,thin,dashed}} 
\newsavebox{\measure@tikzpicture}
\def\tikz@width{#1}%
\begin{document}
\title{On the Stability of Unconditionally Positive and Linear Invariants Preserving Time Integration Schemes}
\author[1]{Thomas Izgin} 
\author[1]{Stefan Kopecz}
\author[1]{Andreas Meister}
\affil[1]{Department of Mathematics, University of Kassel, Germany}
\affil[]{izgin@mathematik.uni-kassel.de\ \&\ kopecz@mathematik.uni-kassel.de\ \&\ meister@mathematik.uni-kassel.de}
\setcounter{Maxaffil}{0}
\renewcommand\Affilfont{\itshape\small}
\date{}
%

%
%
\maketitle
\begin{abstract} Higher-order time integration methods that unconditionally preserve the positivity and linear invariants of the underlying differential equation system cannot belong to the class of general linear methods. This poses a major challenge for the stability analysis of such methods since the new iterate depends nonlinearly on the current iterate.
Moreover, for linear systems, the existence of linear invariants is always associated with zero eigenvalues, so that steady states of the continuous problem become non-hyperbolic fixed points of the numerical time integration scheme. Altogether, the stability analysis of such methods requires the investigation of non-hyperbolic fixed points for general nonlinear iterations. 

Based on the center manifold theory for maps we present a theorem for the analysis of  the stability of non-hyperbolic fixed points of time integration schemes applied to problems whose steady states form a subspace.
This theorem provides sufficient conditions for both the stability of the method and the local convergence of the iterates to the steady state of the underlying initial value problem.
This theorem is then used to prove the unconditional stability of the MPRK22($\alpha$)-family of modified Patankar-Runge-Kutta schemes when applied to arbitrary positive and conservative linear systems of differential equations. The theoretical results are confirmed by numerical experiments. \end{abstract}
\section{Introduction}\label{sec:intro}
%
In recent years, high order time integration methods have been introduced which, when applied to specific differential equations, unconditionally preserve both positivity and linear invariants, see \cite{BDM2003,KM18,KM18Order3,MR3934688,MR3969000,MR4064785,MR4194400,BBKS2007,BRBM2008,MR4109346,MR4087156}.  
For general linear methods, see \cite{MR2604724,MR2657217}, unconditional positivity is restricted to first order \cite{MR509974,MR1963511} and hence, the previously mentioned schemes do not belong to the class of general linear methods. This has clear implications for the stability analysis of such methods as described below.

In the following we investigate the stability behavior of the above mentioned numerical methods applied to stable positive linear systems of the form
\begin{equation}
\by'(t)=\bA\by(t) \label{eq:Dahlquist_System}
\end{equation}
with $\bA\in \R^{N\times N}$ 
possessing $k>0$ linear invariants and initial condition
\begin{equation}
\by(0)=\by^0>\bzero \label{eq:IC}.
\end{equation}
The presence of linear invariants means, that there exist vectors $\bn_1,\dotsc,\bn_k\in \R^N\setminus\{\bzero\}$ such that $\bn_i^T\by(t)=\bn_i^T\by^0$ for all $t\geq 0$, or equivalently $\bn_i^T\bA=\bzero$ for $i=1,\dotsc,k$. 
Note that the existence of $k$ linear invariants is given if and only if $k=\dim(\ker(\bA^T))=\dim(\ker(\bA))$. 
In addition, the system \eqref{eq:Dahlquist_System} is positive if and only if the matrix $\bA$ is a Metzler matrix, i.\,e.\ a matrix with nonnegative off-diagonal elements, see \cite{Luen79}, which guarantees $\by(t)>\bzero$ for all $t>0$ whenever $\by^0>\bzero$. 
Moreover, to ensure stable steady states $\by^*\in \ker(\bA)$
the matrix $\bA$ in \eqref{eq:Dahlquist_System} must have a spectrum $\sigma(\bA)\tm \C^-=\{z\in \C\mid \operatorname{Re}(z)\leq0\}$ and eigenvalues of $\bA$ with vanishing real part have to be associated with a Jordan block size of 1, see Theorem \ref{Thm:StabLin}.
In this situation we have $\by(t)\to\by^*$ as $t\to\infty$. 
Up to now a stability analysis of higher-order positivity preserving time integration schemes applied to \eqref{eq:Dahlquist_System} is only available for $N=2$ as given in \cite{IKM2122}.

As a consequence of the presence of linear invariants, $0$ is always an eigenvalue of $\bA$ which implies the existence of nontrivial steady state solutions. For every reasonable  time integration scheme $\by^{n+1}=\bg(\by^n)$, these steady state solutions have to be fixed points. The common way to study the stability of a fixed point $\by^*$ of $\bg$ is to compute the eigenvalues of the Jacobian $\bD\bg(\by^*)$. It is well-known that the fixed point $\by^*$ is asymptotically stable if the spectral radius $\rho$ of the Jacobian satisfies $\rho(\bD\bg(\by^*))<1$. Unfortunately, the existence of linear invariants leads to non-hyperbolic fixed points $\by^*$ of the numerical scheme, i.\,e.\ the Jacobian $\bD\bg(\by^*)$ has at least one eigenvalue $\lambda$ with $\abs\lambda = 1$. 

If the time integration scheme applied to \eqref{eq:Dahlquist_System} results in a linear iteration
\[\by^{n+1}=\bm R(\Delta t,\bA)\by^n,\]
as is the case for Runge--Kutta schemes, the stability of the non-hyperbolic fixed point $\by^*$ is again fully determined by the eigenvalues of the Jacobian $\bD\bg(\by^*)=\bm R(\Delta t,\bA)$. 
In this case, the non-hyperbolic fixed point $\by^*$ is stable if and only if $\rho(\bD\bg(\by^*))=1$ and all eigenvalues $\lambda$ of $\bD\bg(\by^*)$ with $\abs{\lambda}=1$ are associated with a Jordan block of size 1, see \cite{MR1912409}.
Unfortunately, the application of higher-order positivity preserving schemes to the linear system \eqref{eq:Dahlquist_System}
results in a nonlinear iteration of the form 
\begin{equation*}
\by^{n+1}=\bm R(\Delta t, \bA,\by^n)\by^n,
\end{equation*}
see \cite{OH17}. 
For such iterations the stability is not fully determined by the eigenvalues of the Jacobian, see for instance \cite{Osipenko2009}.
Hence, the stability analysis of these numerical methods requires the investigation of non-hyperbolic fixed points of a nonlinear iteration. This is significantly more demanding compared to the linear case.

One way to study the stability of non-hyperbolic fixed points of nonlinear iterations is the center manifold theory, see \cite{mccracken1976hopf,carr1982,iooss1979}. This theory states that the stability of a non-hyperbolic fixed point can be determined by studying the iteration on a lower-dimensional invariant manifold, the so-called center manifold.

To avoid the application of the center manifold theory to each positivity preserving scheme separately, we present a theorem which provides sufficient conditions for the stability of all such methods. Thereby, the main assumption of this new theorem is 
that the fixed points of the nonlinear iteration form a linear subspace of $\R^N$. This is a reasonable requirement due to the fact that the steady states of the underlying differential equation \eqref{eq:Dahlquist_System} also form a linear subspace of dimension $k>0$, whenever $k$ linear invariants are present.
The theorem contains two main statements. First, the existence of $k$ linear invariants implies that $\lambda=1$ is an eigenvalue of the Jacobian $\bD\bg(\by^*)$ of multiplicity $k$ and the non-hyperbolic fixed point $\by^*$ is stable, if the remaining $N-k$ eigenvalues have absolute value less than one. Second, if the numerical scheme preservers all $k$ linear invariants, then the iterates locally converge to the unique steady state of the initial value problem \eqref{eq:Dahlquist_System}, \eqref{eq:IC}.
Furthermore, it is worth mentioning that the new theorem can directly be used for the stability analysis of time integration schemes in the context of nonlinear systems of differential equations.  

In addition, we want to emphasize that it is not sufficient to assess the stability of a higher-order positivity preserving scheme in terms of a linear system of the form  
\begin{equation}\label{eq:2x2Dahlquist}
\by'=\Vec{
\lambda & 0\\-\lambda&0} \by, \quad \by(0)=\by^0>\bzero,\quad \lambda\in \R^-,
\end{equation}
which can be seen as a adaptation of Dahlquist's equation
\begin{equation*}
y'=\lambda y,\quad \lambda\in \C^-,
\end{equation*}
originally introduced in \cite{D63}, to linear conservative systems.
One example for this fact is given in \cite{IKM2122}, where the so-called MPRK22ncs($\alpha$) schemes are proven to be $L_0$-stable in the following sense. Applied to the conservative system \eqref{eq:2x2Dahlquist}
the state variable $y_1^n$ satisfies $y_1^{n+1}=R(\Delta t \lambda)y_1^n$ with 
\begin{equation*}
R(z)=\frac{(1-\alpha z)^{1-\frac{1}{\alpha}}}{(1-\alpha z)^{1-\frac{1}{\alpha}}-z\bigl(1-(\alpha-\frac12)z\bigr)},
\end{equation*}
so that $\lim_{z\to -\infty}R(z)=0$ and $\lvert R(z)\rvert\leq1$ for all $z\leq0$ and $\alpha\geq \frac12$. In total this means that the first component represents the behavior of the numerical scheme applied to the Dahlquist equation for $\lambda\in \R^-$ and satisfies all conditions for a scheme to be $L_0$-stable, see \cite{L09}.
Nevertheless, in \cite{IKM2122} it is proved that MPRK22ncs($\alpha$) face severe time step restrictions for $\alpha<1$ in order to be stable when applied to a general two--dimensional linear positive and conservative system
\begin{equation}
\mbfy'=\bA\mbfy,\quad \bA=\begin{pmatrix*}[r]
-a &b\\ a &-b
\end{pmatrix*},\quad a,b\geq 0,\quad a+b> 0,\label{OLD_PDS_test}
\end{equation}
which was also used in \cite{IKM21} for studying the linearization of MPRK22 schemes.
Hence, to understand the stability behavior of such nonlinear schemes, one should directly investigate the system \eqref{eq:Dahlquist_System}. 

Besides the introduction of a stability theorem for general higher-order positivity preserving schemes, 
the usability of this theorem will be demonstrated in the context of MPRK22($\alpha$) methods to theoretically study their known high robustness.
These schemes were introduced for the time integration of positive and conservative production-destruction systems (PDS)
\begin{equation}
y_i'=\sum_{j=1}^N (p_{ij}(\by)-d_{ij}{(\by)})\quad \text{ with } \quad p_{ij}(\by), d_{ij}(\by) \ge 0 \label{eq:PDS_ODE}
\end{equation}
with $\by>\bzero$ and $i=1,\dotsc,N$ in \cite{KM18} and are given by  
\begin{subequations}\label{eq:MPRK22b}
\begin{align}
\quad &\begin{aligned}
\mathllap{y_i^{(1)}} &= y_i^n,
\end{aligned}\label{eq:y_1=y_n}\\ 
&\begin{aligned}
\mathllap{y_i^{(2)}} &= y_i^n + \alpha\Delta t\sum_{j=1}^N\vastl(p_{ij}(\mbfy^{(1)})\frac{y_j^{(2)}}{y_j^{(1)}}-d_{ij}(\mbfy^{(1)})\frac{y_i^{(2)}}{y_i^{(1)}}\vastr),
\end{aligned}\label{eq:gamma}\\ 
&\begin{multlined}[b][.7\columnwidth]
\mathllap{y_i^{n+1}} = y_i^n + \Delta t\sum_{j=1}^N\vastl( \Biggl(\biggl(1-\frac1{2\alpha}\biggr) p_{ij}(\mbfy^{(1)})+\frac1{2\alpha} p_{ij}(\mbfy^{(2)})\Biggr)\frac{y_j^{n+1}}{(y_j^{(2)})^{\frac{1}{\alpha}}(y_j^{(1)})^{1-\frac{1}{\alpha}}}\\
- \Biggl(\biggl(1-\frac1{2\alpha}\biggr) d_{ij}(\mbfy^{(1)})+ \frac1{2\alpha} d_{ij}(\mbfy^{(2)})\Biggr)\frac{y_i^{n+1}}{(y_i^{(2)})^{\frac{1}{\alpha}}(y_i^{(1)})^{1-\frac{1}{\alpha}}}\vastr)
\end{multlined}\label{eq:y_n_plus_1}
\end{align}
\end{subequations}
for $i=1,\dots,N$ with $\alpha\geq\frac12$. These schemes can be applied to \eqref{eq:Dahlquist_System} whenever the linear system is conservative, which means $\mathbf 1^T\by$ with $\mathbf 1=(1,\dots,1)^T$ represents a linear invariant. 
It was shown in \cite{KM18} that MPRK22($\alpha$) schemes are unconditionally positive and conservative, hence the iterates satisfy $\by^n>\bzero$ for all $n\in \N$ whenever $\by^0>\bzero$ as well as $\mathbf{1}^T\by^n=\mathbf{1}^T\by^0$. 
Generally, we say that a scheme unconditionally conserves the linear invariant determined by a vector $\bn\in \R^N$ if $\bn^T\by^n=\bn^T\by^0$ for all $n\in \N$ and $\Delta t>0$.

The second order MPRK22($\alpha$) schemes are examples of modified Patankar--Runge--Kutta methods, see \cite{BDM2003,KM18,KM18Order3,MR3969000,MR3934688,MR4064785}. In \cite{KM18,KM18Order3,MR3987250}, MPRK schemes up to third order were introduced and investigated which led to the construction of SSP-MPRK methods in \cite{MR3969000,MR3934688}. Based on deferred correction methods and the idea in \cite{BDM2003}, arbitrary high order MPRK schemes where introduced in \cite{MR4064785}.
MPRK schemes are of considerable interest and widely used such as in the context of ecosystems \cite{HenseBeckmann2010,HenseBurchard2010,WHK2013,BMZ2007,BMZ2009,MeisterBenz2010} or ocean models \cite{SemeniukDastoor2017,BBKMNU2006}. Further applications can be found in the context of magneto-thermal winds \cite{Gressel2017} or warm-hot intergalactic mediums \cite{KlarMuecket2010} as well as in that of the SIR epidemic model \cite{WS21}. 
For other recent approaches which facilitate positive and conservative numerical approximations, we refer to \cite{MR4087156,MR4109346, MR4194400,nuesslein2021positivitypreserving,blanes2021positivitypreserving}, some of which even conserve all linear invariants.

The outline of the paper is as follows. 
In Section~\ref{sec:stab_dyn_sys} we  
present the novel stability theorem, which gives sufficient conditions for higher-order positivity preserving schemes to ensure stability as well as the local convergence to the unique steady state of the underlying initial value problem.
The application of this theorem to MPRK22($\alpha$) schemes is subject of Section~\ref{sec:stab_mprk22}. We show that MPRK22($\alpha$) methods are unconditionally stable when applied to linear systems. Finally in Section \ref{Sec:Num_Tests}, we provide numerical experiments confirming the theoretical results.

\section{Center Manifold Theory and Stability of Time Integration Schemes}\label{sec:stab_dyn_sys}
For the sake of completeness, we summarize in this section the main definitions and statements concerning the stability of steady states and fixed points of maps.
This theory is used to prove the main theorem of this work, Theorem \ref{Thm_MPRK_stabil}, which provides  criteria to assess the stability of non-hyperbolic fixed points of nonlinear iterations conserving at least one linear invariant.

In the following, we use $\norm{\ \cdot\  }$ to represent an arbitrary norm in $\R^l$ for $l\in \N$ and $\bD\bf$ denotes the Jacobian of a map $\bf$.
\begin{defn}\label{Def Lyapunov Cont}
Let $\by^*\in \R^N$ be a steady state solution of a differential equation $\by'={\bf}(\by)$, that is ${\bf}(\by^*)=\bzero$.
\begin{enumerate}
\item\label{item:lyap_stab} Then $\by^*$ is called \emph{Lyapunov stable} if, for any $\epsilon>0$, there exists a $\delta=\delta(\epsilon)>0$ such that $\norm{\by(0)-\by^*}<\delta$ implies $\norm{\by(t)-\by^*}<\epsilon$ for all $t\geq 0$.
\item If in addition to a), there exists a constant $c>0$ such that $\Vert \by(0)-\by^*\Vert<c$ implies  $\Vert \by(t)-\by^*\Vert \to 0$ for $t\to \infty$,  we call  $\by^*$ \emph{asymptotically stable.}
\item A steady state solution that is not stable is said to be \emph{unstable}.
\end{enumerate}
\end{defn}

\begin{thm}{(\cite[Theorem 3.23]{MR1912409})}\label{Thm:StabLin}
A steady state $\by^*$ of $\by'=\bM\by$ with a matrix $\bM\in \R^{N\times N}$
\begin{enumerate}
\item 	 is stable if and only if $\max_{\lambda\in \sigma(\bM)}\operatorname{Re}(\lambda)\leq 0$ and all $\lambda$ with $\operatorname{Re}(\lambda)=0$ are associated with a Jordan block of size 1.
\item  is asymptotically stable if and only if $\max_{\lambda\in \sigma(\bM)}\operatorname{Re}(\lambda)< 0$.
\end{enumerate}
\end{thm}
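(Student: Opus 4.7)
The plan is to reduce the question to the behavior of the matrix exponential $e^{\bM t}$ and then analyze it via the Jordan canonical form of $\bM$.

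First I would observe that for any solution $\by(t)$ of $\by' = \bM\by$ and any steady state $\by^*\in\ker(\bM)$, the difference $\bz(t) = \by(t)-\by^*$ satisfies $\bz'(t) = \bM\bz(t)$, hence $\bz(t) = e^{\bM t}\bz(0)$. This means Lyapunov stability of $\by^*$ in the sense of Definition~\ref{Def Lyapunov Cont}\ref{item:lyap_stab} is equivalent to boundedness of the operator norm $\|e^{\bM t}\|$ for $t\geq 0$, and asymptotic stability is equivalent to $e^{\bM t}\to \bm 0$ as $t\to\infty$. This linearity step is what makes local stability for $\by'=\bM\by$ equivalent to a global statement, and it separates the analysis from the nonlinear case treated later.

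Next I would invoke the Jordan decomposition $\bM = \bP\bJ\bP^{-1}$, so $e^{\bM t} = \bP e^{\bJ t}\bP^{-1}$, and compute $e^{\bJ t}$ blockwise. For a Jordan block of size $m$ with eigenvalue $\lambda$, the entries of $e^{\bJ_\lambda t}$ are of the form $\frac{t^j}{j!}e^{\lambda t}$ with $0\leq j\leq m-1$, so their moduli behave like $t^j e^{\operatorname{Re}(\lambda) t}$. From here the four relevant cases fall out immediately:
\begin{itemize}
\item $\operatorname{Re}(\lambda)<0$: every such entry decays to $0$ exponentially;
\item $\operatorname{Re}(\lambda)>0$: at least one entry grows without bound, contradicting stability;
\item $\operatorname{Re}(\lambda)=0$ with $m=1$: the block is simply $e^{\lambda t}$ with $|e^{\lambda t}|=1$, giving a bounded but non-vanishing contribution;
\item $\operatorname{Re}(\lambda)=0$ with $m\geq 2$: an entry grows like $t^{m-1}$, so $\|e^{\bM t}\|$ is unbounded.
\end{itemize}

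To finish, I would assemble these cases to obtain both directions of (a) and (b). For (a), stability holds iff no block produces unbounded entries, which by the case analysis is exactly the spectral condition stated. For (b), asymptotic stability demands $e^{\bM t}\to\bm 0$, and this fails as soon as any eigenvalue has $\operatorname{Re}(\lambda)=0$ (a block then has at least one non-vanishing entry), while if every eigenvalue satisfies $\operatorname{Re}(\lambda)<0$, exponential decay dominates the polynomial prefactors and $\|e^{\bM t}\|\to 0$. The only mild obstacle is the careful norm estimate of $e^{\bJ t}$ in the non-hyperbolic case to rule out cancellation effects between blocks; this is handled by choosing $\bz(0)$ equal to a generalized eigenvector of the offending block so that the growth cannot be cancelled, thereby furnishing the "only if" directions.
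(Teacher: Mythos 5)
Your argument is correct and complete in outline; it is the standard proof of this classical result via the reduction $\bz(t)=e^{\bM t}\bz(0)$ and the blockwise computation of $e^{\bJ t}$ in the Jordan form, including the right device (initializing at a generalized eigenvector of the offending block) for the ``only if'' directions. Note, however, that the paper offers no proof of this statement at all --- it is quoted verbatim from the cited reference --- so there is nothing to compare your route against; your write-up would serve as a self-contained justification. The only detail worth adding is that for a complex eigenvalue $\lambda$ of the real matrix $\bM$ the generalized eigenvector $\bv$ is complex, so one should pass to $\operatorname{Re}(\bv)$ or $\operatorname{Im}(\bv)$ (at least one of which inherits the unbounded growth, since $e^{\bM t}\operatorname{Re}(\bv)=\operatorname{Re}(e^{\bM t}\bv)$) to produce a genuine real initial condition violating stability.
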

Now, according to \eqref{eq:Dahlquist_System}, the eigenvalue $\lambda=0$ of $\bA$ has a multiplicity of $k\geq 1$. Hence, the corresponding differential equation has no asymptotically stable steady state $\by^*$. However, since we have assumed that $\sigma(\bA)\tm \C^-$ and the eigenvalues with zero real part  are associated with a Jordon block of size 1, any steady state $\by^*$ of \eqref{eq:Dahlquist_System} is stable.

As we are interested in numerical schemes mimicking the stability behavior of the exact solution, we shall consider the following definition.
\begin{defn}\label{Def_Lyapunov_Diskr}
Let $\by^*$ be a fixed point of an iteration scheme $\by^{n+1}=\bg(\by^n)$, that is $\by^*=\bg(\by^*)$. 
\begin{enumerate}
\item\label{def:stab} Then $\by^*$ is called \emph{Lyapunov stable} if, for any $\epsilon>0$, there exists a $\delta=\delta(\epsilon)>0$ such that $\norm{\by^0-\by^*}<\delta$ implies $\norm{\by^n- \by^*}<\epsilon$ for all $n\geq 0$.
\item If in addition to a), there exists a constant $c>0$ such that $\Vert \by^0-\by^*\Vert<c$ implies $\Vert \by^n-\by^*\Vert \to 0$ for $n\to \infty$, we call $\by^*$ \emph{asymptotically stable.}
\item A fixed point that is not stable is said to be \emph{unstable}.
\end{enumerate}
\end{defn}
In the following, we will also briefly speak of stability instead of Lyapunov stability. As stated by the next theorem, it is in some cases sufficient to investigate the linearized method in order to understand the stability properties of a fixed point.
\begin{thm}[{\cite[Theorem 1.3.7]{SH98}}]\label{Thm:_Asym_und_Instabil}
Let  $\by^{n+1}=\bg(\by^n)$ be an iteration scheme with fixed point $\by^*$. Suppose the Jacobian $\bD\bg(\by^*)$ exists and denote its spectral radius by $\rho(\bD\bg(\by^*))$. Then
\begin{enumerate}
\item $\by^*$ is asymptotically stable if $\rho(\bD\bg(\by^*))<1$. 
\item $\by^*$ is unstable if $\rho(\bD\bg(\by^*))>1$.
\end{enumerate}
\end{thm}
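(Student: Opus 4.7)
The plan for part a) is to reduce everything to a contraction-mapping argument in a suitably adapted norm. Since $\rho(\bD\bg(\by^*))<1$, the classical fact that the spectral radius equals the infimum of induced matrix norms provides a norm $\norm{\cdot}_*$ on $\R^N$ with $\norm{\bD\bg(\by^*)}_*=q<1$. Taylor-expanding $\bg$ at $\by^*$ yields
\begin{equation*}
\bg(\by)-\by^*=\bD\bg(\by^*)(\by-\by^*)+\mbfr(\by),\qquad \norm{\mbfr(\by)}_*=o(\norm{\by-\by^*}_*).
\end{equation*}
Choosing a closed ball $B_\delta$ around $\by^*$ in which $\norm{\mbfr(\by)}_*\leq \tfrac{1-q}{2}\norm{\by-\by^*}_*$, I would obtain $\norm{\bg(\by)-\by^*}_*\leq \tfrac{1+q}{2}\norm{\by-\by^*}_*$ on $B_\delta$, so $B_\delta$ is forward invariant and every trajectory starting in it converges to $\by^*$ at a geometric rate. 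Because all norms on $\R^N$ are equivalent, this yields both Lyapunov stability and asymptotic convergence in the original norm.

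For part b), the plan is to exhibit an expanding direction arbitrarily close to $\by^*$. I would split $\R^N=E^u\oplus E^{cs}$ into the (real) generalized eigenspaces of $\bD\bg(\by^*)$ associated with eigenvalues of modulus strictly larger than $1$ and with the remaining eigenvalues, respectively, and denote the corresponding projections by $P_u,P_{cs}$. On each summand I would choose an adapted norm $\norm{\cdot}_*$ such that $(\bD\bg(\by^*)|_{E^u})^{-1}$ has norm at most $1/\beta$ with $\beta>1$ and the linearization on $E^{cs}$ is bounded by some $\mu$ that is not too large compared to $\beta$. I would then form the invariant cone $C=\{\by\mid \norm{P_u(\by-\by^*)}_*\geq K\norm{P_{cs}(\by-\by^*)}_*\}$ for a suitably large $K$. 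Using the same Taylor expansion as in part a), the remainder $\mbfr$ is $o(\norm{\by-\by^*}_*)$, so on $C\cap B_\delta$ with $\delta$ small enough, iterating $\bg$ keeps the trajectory in $C$ and expands the $E^u$-component by at least $\beta'>1$ per step. Since $C$ intersects every neighborhood of $\by^*$, one can pick initial data $\by^0$ arbitrarily close to $\by^*$ whose iterates leave $B_\delta$ in finite time, contradicting the stability condition of Definition~\ref{Def_Lyapunov_Diskr}.

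The main obstacle will be the cone construction in part b). The adapted norms on $E^u$ and $E^{cs}$, the cone aperture $K$, and the radius $\delta$ must all be chosen so carefully that the nonlinear remainder $\mbfr$, which mixes the $E^u$- and $E^{cs}$-components, does not destroy the expansion on $E^u$. The genuine subtlety comes from Jordan blocks attached to eigenvalues with $\abs\lambda>1$, since these prevent a naive choice of Euclidean norm from expanding all vectors uniformly; one must tilt the basis so that the generalized eigenvectors are scaled appropriately. By contrast, part a) avoids this difficulty entirely: once the contraction norm is in place, the argument is a one-line iteration. Altogether the two parts together rest on the same analytic fact, namely that a $C^1$ map is, in a small neighborhood of a fixed point, an arbitrarily small perturbation of its linearization in a carefully chosen norm.
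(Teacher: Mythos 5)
The paper does not prove this statement at all: it is quoted verbatim from the literature (Theorem~1.3.7 of Stuart--Humphries), so there is no in-paper argument to compare yours against. Judged on its own, your proof is the standard textbook argument and is essentially correct. Part a) is sound: the only hypothesis available is differentiability of $\bg$ at the single point $\by^*$, and that is exactly what your Taylor expansion with an $o(\norm{\by-\by^*})$ remainder uses, so the contraction in an adapted norm goes through (one pedantic note: the infimum of induced norms equals $\rho$ but need not be attained; you only need some norm with $\norm{\bD\bg(\by^*)}_*\leq\rho+\varepsilon<1$, which always exists). Part b) is the classical cone/expansion argument; the bookkeeping you flag as the main obstacle does close, because on the cone $C$ one has $\norm{\by-\by^*}_*\leq(1+1/K)\norm{P_u(\by-\by^*)}_*$, so the $o(\norm{\by-\by^*}_*)$ remainder is dominated by the unstable component, and choosing the adapted norms so that the contraction/neutral part is bounded by some $\mu<\beta$ (possible since all eigenvalues off $E^u$ have modulus at most $1<\beta$) yields both cone invariance and a uniform expansion factor $\beta'>1$. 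Since $E^u\neq\{0\}$ when $\rho(\bD\bg(\by^*))>1$ and $E^u\setminus\{\bzero\}$ meets every neighborhood of $\by^*$ inside $C$, orbits starting arbitrarily close to $\by^*$ must leave the fixed ball $B_\delta$, which contradicts Definition~\ref{Def_Lyapunov_Diskr}~\ref{def:stab}. Your closing observation -- that both halves rest on the map being an arbitrarily small perturbation of its linearization in a well-chosen norm -- is exactly the right way to see the result.
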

The above theorem gives sufficient conditions for the stability of fixed points that are hyperbolic in the following sense.
\begin{defn}[{\cite[Definition 1.3.6]{SH98}}]\label{Def hyperbolic}
A fixed point $\by^*$ of an iteration scheme $\by^{n+1}=\bg(\by^n)$ is called \emph{hyperbolic} if $\abs\lambda\ne 1$ for all eigenvalues $\lambda$ of $\bD\bg(\by^*)$. If a fixed point is not hyperbolic, it is called \emph{non-hyperbolic}.
\end{defn}
The stability of non-hyperbolic fixed points of a scheme outside the class of general linear methods is in general not induced by the eigenvalues of the corresponding Jacobian, see \cite{Osipenko2009, SH98}. Hence, higher-order terms have to be included within the stability analysis of this kind of methods. 
One possibility to decrease the complexity of such a stability analysis is to use the center manifold theory, which allows to assess the stability based on a corresponding iteration on a lower dimensional manifold and is briefly summarized in the next subsection.
\subsection{Center Manifold Theory}\label{Sec:Center_Manifold}
To study the stability of a non-hyperbolic fixed point $\by^*$ of an iteration scheme with $\mathcal C^1$-map $\bg$, we make use of an affine linear transformation\footnote{See the proof of Theorem~\ref{Thm_MPRK_stabil} for the details of this transformation.} to obtain a $\mathcal{C}^1$-map $\bG\colon \mathcal M\to \R^{N}$, with $\mathcal M\subset \R^{N}$ being a neighborhood of the origin, which has the form
\begin{equation}
\bG(\bw_1,\bw_2)=\Vec{\bU\bw_1+\bu(\bw_1,\bw_2)\\\bV\bw_2 +\bv(\bw_1,\bw_2)},\label{Form_of_G}
\end{equation}
with $\bw_1\in \R^m$, $\bw_2\in \R^l$ and $m+l=N$. The square matrices $\bU\in\R^{m\times m}$ and $\bV\in\R^{l\times l}$ are such that $\abs\lambda = 1$ holds for all eigenvalues $\lambda$ of $\bU$ and each eigenvalue $\mu$ of $\bV$  satisfies $\abs\mu <1$. The functions $\bu$ and $\bv$ are in $\mathcal{C}^1$ and $\bu,\bv$ as well as their first order derivatives vanish at the origin, that is
\begin{align*}
\bu(\bzero,\bzero)&=\bzero, & \bD\bu(\bzero,\bzero)&=\bzero, & 
\bv(\bzero,\bzero)&=\bzero, & \bD\bv(\bzero,\bzero)&=\bzero,
\end{align*} where $\bzero$ stands for the zero vector or matrix of appropriate size, respectively. 
In particular, the fixed point $\by^*$ of $\bg$ is mapped to $\bzero$, which is a fixed point of $\bG$ with equal stability properties as $\by^*$ as  we point out in the proof of Theorem \ref{Thm_MPRK_stabil}. 
Hence, it is sufficient to study the stability of the origin with respect to $\bG$, which is a simplification due to the existence of a center manifold. 

\begin{thm}{(Center Manifold Theorem, \cite[Theorem 2.1, Remark 2.6]{mccracken1976hopf})}\label{Thm:Ex_CM}
Let $\bG$ be defined as in \eqref{Form_of_G} with Lipschitz continuous derivatives on $\mathcal M$.
\begin{enumerate}
\item\label{item:existence}  (Existence):
There exists a center manifold for $\bG$, which is locally representable as the graph of a function $\bh\colon\R^m\to \R^l$.  This means, for some $\epsilon>0$ there exists a $\mathcal{C}^1$-function $\bh\colon\R^m\to\R^l$ with $\bh(\bzero)=\bzero$ and $\bD\bh(\bzero)=\bzero$ such that $\Vert \bw_1^0\Vert,\Vert \bw_1^1\Vert  <\epsilon$ and $(\bw_1^1, \bw_2^1)^T=\bG(\bw_1^0,\bh(\bw_1^0))$ imply $\bw_2^1=\bh(\bw_1^1)$.
\item\label{item:attractivity}  (Local Attractivity): If in addition to \ref{item:existence} the iterates $(\bw_1^n,\bw_2^n)^T$ generated by 
\begin{align}
\Vec{\bw_1^{n+1}\\\bw_2^{n+1}}=\bG(\bw_1^n,\bw_2^n)=\Vec{\bU\bw_1^n+\bu(\bw_1^n,\bw_2^n)\\\bV\bw_2^n +\bv(\bw_1^n,\bw_2^n)},\quad \Vec{\bw_1^0\\ \bw_2^0}\in \mathcal M\label{Form_of_G_transformed}.
\end{align}  
satisfy $\Vert \bw_1^n\Vert,\Vert \bw_2^n\Vert<\epsilon$ for all $n\in \N_0$, then the distance of $(\bw_1^n,\bw_2^n)$ to the center manifold tends to zero for $n\to \infty$, i.\,e.\ $\Vert \bw_2^n-\bh(\bw_1^n)\Vert\to 0$ for $n\to \infty$.
\end{enumerate}
\end{thm}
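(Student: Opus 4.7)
My overall plan is to prove both parts of the theorem via the Hadamard graph transform combined with a contraction mapping argument in an adapted norm. Since the statement is local, I would first replace $\bu$ and $\bv$ by smoothly cut-off versions that agree with the originals on a small ball about the origin and vanish outside a slightly larger ball. Because $\bD\bu(\bzero,\bzero)=\bzero$ and $\bD\bv(\bzero,\bzero)=\bzero$, the extensions can be made globally Lipschitz with Lipschitz constants $L_{\bu},L_{\bv}$ arbitrarily small by choosing the cut-off radius small enough. Since the eigenvalues of $\bU$ all lie on the unit circle and those of $\bV$ strictly inside, standard adapted norms on $\R^m$ and $\R^l$ render $\bU$ bi-Lipschitz with constants close to one and $\|\bV\|<1-\eta$ for some $\eta>0$; all subsequent estimates are carried out in these norms.

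For part (a), I would seek the center manifold as the graph of a function $\bh$ in an appropriate complete subset $\mathcal{F}$ of continuous functions $\bh\colon\R^m\to\R^l$ satisfying $\bh(\bzero)=\bzero$ and $\mathrm{Lip}(\bh)\le 1$, equipped with the sup-norm. The invariance relation reads
\begin{equation*}
\bh\bigl(\bU\bw_1+\bu(\bw_1,\bh(\bw_1))\bigr)=\bV\bh(\bw_1)+\bv(\bw_1,\bh(\bw_1)).
\end{equation*}
Since $\bU$ is invertible and $\bu$ is Lipschitz with small constant, for every $\tilde\bw_1$ the equation $\tilde\bw_1=\bU\bw_1+\bu(\bw_1,\bh(\bw_1))$ has a unique solution $\bw_1=\Phi_{\bh}(\tilde\bw_1)$ by a Banach fixed point argument applied to $\bw_1\mapsto\bU^{-1}\bigl(\tilde\bw_1-\bu(\bw_1,\bh(\bw_1))\bigr)$. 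Setting
\begin{equation*}
(\mathcal{T}\bh)(\tilde\bw_1)=\bV\bh(\Phi_{\bh}(\tilde\bw_1))+\bv\bigl(\Phi_{\bh}(\tilde\bw_1),\bh(\Phi_{\bh}(\tilde\bw_1))\bigr)
\end{equation*}
defines the graph transform $\mathcal{T}\colon\mathcal{F}\to\mathcal{F}$, whose fixed points parametrize invariant manifolds. A routine estimate using $\|\bV\|<1-\eta$ and the smallness of $L_{\bu},L_{\bv}$ shows $\mathcal{T}$ is a contraction, yielding a unique $\bh\in\mathcal{F}$. An additional iteration on classes of $\mathcal{C}^1$-functions, using Lipschitz continuity of $\bD\bu$ and $\bD\bv$, upgrades $\bh$ to $\mathcal{C}^1$, and differentiating the invariance relation at $\bzero$ gives the Sylvester-type equation $\bV\,\bD\bh(\bzero)=\bD\bh(\bzero)\,\bU$; since $\sigma(\bU)$ and $\sigma(\bV)$ are disjoint, this forces $\bD\bh(\bzero)=\bzero$.

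For part (b), I would track $\bd_n=\bw_2^n-\bh(\bw_1^n)$ and exploit the invariance established in part (a). Substituting into \eqref{Form_of_G_transformed} and subtracting the invariance relation yields
\begin{align*}
\bd_{n+1}&=\bV\bd_n+\bigl[\bv(\bw_1^n,\bw_2^n)-\bv(\bw_1^n,\bh(\bw_1^n))\bigr]\\
&\quad+\bigl[\bh(\bU\bw_1^n+\bu(\bw_1^n,\bh(\bw_1^n)))-\bh(\bU\bw_1^n+\bu(\bw_1^n,\bw_2^n))\bigr].
\end{align*}
Estimating the bracketed terms by Lipschitz continuity of $\bv$ and $\bu$ in their second argument, with constants of order $L_{\bu},L_{\bv}$, and using $\mathrm{Lip}(\bh)\le 1$ together with $\|\bV\|<1-\eta$, I obtain $\|\bd_{n+1}\|\le\gamma\|\bd_n\|$ for some $\gamma<1$, provided the iterates remain in the neighborhood where the cut-off is inactive; this is guaranteed by the hypothesis $\|\bw_1^n\|,\|\bw_2^n\|<\epsilon$. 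Hence $\|\bd_n\|\to 0$ geometrically.

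The main obstacle I expect is the existence part, specifically the simultaneous balancing of function space, norm, and cut-off: one must choose an adapted norm that makes the center block nearly isometric and $\bV$ strictly contractive with a spectral gap, while keeping the allowed Lipschitz bound on $\bh$ and the Lipschitz constants of $\bu,\bv$ small enough to make $\mathcal{T}$ a contraction on $\mathcal{F}$. Upgrading the merely Lipschitz fixed point to a $\mathcal{C}^1$-function with vanishing derivative at the origin is the second technical step; once both are in place, the attractivity estimate in part (b) is essentially a reuse of the contraction estimate for $\mathcal{T}$.
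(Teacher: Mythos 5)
This theorem is not proved in the paper at all: it is quoted as a known result from the cited reference (Marsden--McCracken, Theorem 2.1 and Remark 2.6), so there is no in-paper argument to compare against. Your proposal is the standard Hadamard graph-transform proof --- cut-off to get small global Lipschitz constants, adapted norms giving $\|\bV\|<1-\eta$ and $\bU$ nearly isometric, contraction on Lipschitz graphs for existence, the Sylvester equation $\bV\,\bD\bh(\bzero)=\bD\bh(\bzero)\,\bU$ with disjoint spectra for $\bD\bh(\bzero)=\bzero$, and the $\bd_n=\bw_2^n-\bh(\bw_1^n)$ contraction for attractivity --- which is essentially how the cited source argues, and your outline is correct. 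The only step you gesture at rather than carry out is the upgrade of the Lipschitz fixed point to a genuine $\mathcal{C}^1$ function (fiber-contraction or a parallel contraction on candidate derivatives), which you rightly flag as the main technical burden.
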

We want to note that the above theorem is formulated with weaker assumptions than the corresponding theorem in \cite{IKM2122}. 
This allows to use Theorem~\ref{Thm_MPRK_stabil} introduced below for the stability analysis of a larger class of time integration schemes than is possible with \cite[Theorem 2.9]{IKM2122}.

As will be seen in Theorem \ref{Thm:Stab_CM}, the existence of a center manifold enables the investigation of the stability properties of the origin based on a system with reduced dimension.  This reduced system is obtained by restricting \eqref{Form_of_G} to the center manifold, i.\,e.\  using $\bw_2=\bh(\bw_1)$ which leads to the map
\begin{equation}\label{Flow_center_manifold} \mathcal G(\bw_1)=\bU \bw_1 + \bu(\bw_1,\bh(\bw_1)).
\end{equation}
\begin{thm}{(\cite[Theorem 8]{carr1982})}\label{Thm:Stab_CM} (Stability): Suppose the fixed point $\bzero\in \R^m$ of $\mathcal G$ from \eqref{Flow_center_manifold} is stable, asymptotically stable or unstable. Then the fixed point $\bzero \in \R^N$ of $\bG$ from \eqref{Form_of_G} is stable, asymptotically stable or unstable, respectively.
\end{thm}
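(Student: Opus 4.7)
The plan is to exploit the attractivity clause (Theorem~\ref{Thm:Ex_CM}\ref{item:attractivity}) to reduce questions about the $N$-dimensional iteration generated by $\bG$ to the $m$-dimensional reduced iteration $\mathcal G$ on the center manifold. The guiding idea is that the center manifold is (locally) invariant and exponentially attractive in the $\bw_2$-direction, so the long-time behaviour of a small orbit $(\bw_1^n,\bw_2^n)$ is governed by the shadow orbit $(\tilde\bw_1^n,\bh(\tilde\bw_1^n))$ lying on the manifold, whose $\bw_1$-component evolves under $\mathcal G$.

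I would handle the three cases in the following order. The \emph{instability} case is essentially free: if $\bzero$ is unstable for $\mathcal G$, take $\bw_1^0$ arbitrarily close to $\bzero$ whose $\mathcal G$-orbit escapes a fixed neighbourhood, and set $\bw_2^0=\bh(\bw_1^0)$; since $\bh(\bzero)=\bzero$ and $\bh$ is $\mathcal C^1$, this initial datum is arbitrarily close to $\bzero\in\R^N$, and by the invariance part of Theorem~\ref{Thm:Ex_CM}\ref{item:existence} the orbit remains on the manifold, so its $\bw_1$-component coincides with the escaping $\mathcal G$-orbit, yielding instability for $\bG$. For \emph{(asymptotic) stability}, fix $\epsilon>0$; choose the auxiliary constants dictated by the stability of $\bzero$ under $\mathcal G$, the local Lipschitz constants of $\bu,\bv,\bh$ on a small ball, and a matrix norm in which $\|\bV\|<1$. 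Pick $\delta>0$ so small that any iterate launched from the $\delta$-ball stays inside $\mathcal M$ while both $\|\bw_1^n\|<\epsilon$ and $\|\bw_2^n-\bh(\bw_1^n)\|<\epsilon$ remain valid; this is where the smallness hypotheses of Theorem~\ref{Thm:Ex_CM}\ref{item:attractivity} have to be verified inductively.

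The technical core is to track the quantities $e_n := \bw_2^n-\bh(\bw_1^n)$ and $d_n := \bw_1^n-\tilde\bw_1^n$, where $\tilde\bw_1^n$ is the $\mathcal G$-orbit sharing the same initial value $\bw_1^0$. Using the invariance identity $\bh(\bU\bw_1+\bu(\bw_1,\bh(\bw_1)))=\bV\bh(\bw_1)+\bv(\bw_1,\bh(\bw_1))$ one finds $e_{n+1}=\bV e_n + \bigl(\text{Lipschitz terms in }\|e_n\|,\|d_n\|\bigr)$; since $\|\bV\|<1$ and the Lipschitz constants of $\bu,\bv$ can be made arbitrarily small on a small ball (because $\bD\bu(\bzero,\bzero)=\bD\bv(\bzero,\bzero)=\bzero$), a discrete Gronwall/contraction argument yields geometric decay $\|e_n\|\le C q^n \|e_0\|$ with $q<1$. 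Feeding this back into the evolution of $d_n$, and again using the smallness of the Lipschitz constants of $\bu$, gives a uniform bound $\|d_n\|\le C'\|e_0\|$. Combined with $\|\tilde\bw_1^n\|<\epsilon/2$ from the stability of $\mathcal G$ and with $\|\bh(\bw_1^n)\|\le L\|\bw_1^n\|<\epsilon/2$, this provides $\|\bw_1^n\|+\|\bw_2^n\|<\epsilon$, proving Lyapunov stability. If in addition $\tilde\bw_1^n\to\bzero$, then $d_n\to\bzero$ and $e_n\to\bzero$ force $(\bw_1^n,\bw_2^n)\to\bzero$, giving asymptotic stability.

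The main obstacle I foresee is the simultaneous bookkeeping in the stability step: one must choose $\delta$ small enough that (i) the iterates never leave $\mathcal M$, (ii) the $\epsilon$ controlling the Lipschitz constants of $\bu,\bv$ is small enough to make the contraction for $e_n$ work, and (iii) the shadowing bound on $d_n$ is good enough to inherit the $\mathcal G$-stability estimate. All three requirements interlock, so the cleanest route is to fix a convenient norm in which $\bV$ is a strict contraction, shrink the working ball until the Lipschitz data become negligible compared to $1-\|\bV\|$, and then perform a single inductive argument on $n$ that verifies (i)–(iii) in one stroke.
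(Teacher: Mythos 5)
First, a point of order: the paper does not prove this statement at all --- it is quoted directly from Carr \cite{carr1982} --- so there is no in-paper argument to measure yours against. Judged on its own terms, your outline follows the classical strategy (local invariance plus exponential attraction of the centre manifold, then a shadowing comparison with the reduced map $\mathcal G$). The instability case is correct as sketched, and so is the decay estimate for $e_n=\bw_2^n-\bh(\bw_1^n)$: the invariance identity gives $e_{n+1}=\bV e_n+O(\theta\norm{e_n})$ with $\theta$ the (arbitrarily small) local Lipschitz constant of $\bu,\bv,\bh$, hence $\norm{e_n}\le Cq^n\norm{e_0}$ with $q<1$ in a norm adapted to $\bV$. (Note the $d_n$-terms do not actually enter this recursion, since both differences are taken at the same $\bw_1^n$.)

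The genuine gap is the claimed uniform bound $\norm{d_n}\le C'\norm{e_0}$ for $d_n=\bw_1^n-\tilde\bw_1^n$ with the shadow orbit anchored at $\tilde\bw_1^0=\bw_1^0$. The recursion is $d_{n+1}=\bU d_n+\eta_n$ with $\norm{\eta_n}\le\theta\norm{d_n}+\theta\norm{e_n}$, and because the centre dynamics is neutral the best per-step factor you can achieve is $\norm{\bU}+\theta\ge 1+\theta>1$ (strictly larger than $1$ in \emph{every} norm if $\bU$ has a nontrivial Jordan block on the unit circle, which the hypotheses of \eqref{Form_of_G} permit). The discrete Gronwall estimate then yields only $\norm{d_n}\lesssim(1+\theta)^n\norm{e_0}$, which is unbounded in $n$ no matter how small $\theta>0$ is; shrinking the working ball does not help, because the factor compounds. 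Nor can you bypass this by viewing $(\bw_1^n)$ as a pseudo-orbit of $\mathcal G$ with geometrically decaying one-step errors: mere Lyapunov stability is not in general robust under summable perturbations when the local Lipschitz constant of $\mathcal G$ exceeds $1$. This is precisely the delicate point in Carr's proof: the shadow orbit is \emph{not} chosen with the same initial value, but is constructed by a fixed-point argument over all forward time (solving the comparison equation ``from $n=\infty$ backwards''), so that the deviation at step $n$ is controlled by the tail $\sum_{j\ge n}q^j$ rather than by the full accumulated history; with that construction one gets $\norm{\bw_1^n-\tilde\bw_1^n}=O(q^n)$ together with $\tilde\bw_1^0$ small, and your concluding estimates do go through. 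It is worth observing that in the only situation where this theorem is invoked in the paper one has $\bU=\bI$, $\bh\equiv\bzero$ and $\mathcal G=\operatorname{id}$ near the origin, and there your anchored accumulation argument does close, since $\norm{d_n}\le\theta\sum_{j}\norm{e_j}\le\theta C\norm{e_0}/(1-q)$; but the theorem as stated is general, and in that generality the anchored shadowing must be replaced.
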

In summary,  the stability of a non-hyperbolic fixed point $\by^*\in\R^N$ of a map $\bg$ can be determined by investigating the fixed point $\bzero\in\R^m$ of $\mathcal G$, which has a lower complexity due to the reduced dimension $m<N$.

To actually calculate the center manifold we need to solve
\begin{equation*}
(\bw_1^1, \bh(\bw_1^1))^T=\bG(\bw_1^0,\bh(\bw_1^0))=\Vec{\bU\bw_1^0+\bu(\bw_1^0,\bh(\bw_1^0))\\\bV\bh(\bw_1^0) +\bv(\bw_1^0,\bh(\bw_1^0))},
\end{equation*} 
which can be rewritten as
\[\bh(\bU\bw_1^0+\bu(\bw_1^0,\bh(\bw_1^0)))=\bV\bh(\bw_1^0) +\bv(\bw_1^0,\bh(\bw_1^0))\]
The above invariance property offers a way to approximate the center manifold up to an arbitrary order.
\begin{thm}{(\cite[Theorem 7]{carr1982})}\label{Thm:Comp_func_h}
Let $\bh$ be a center manifold for $\bG$ and $\bm\Phi$ be a $\mathcal{C}^1(\R^m, \R^l)$-map with $\bm\Phi(\bzero)=\bzero$ and $\bD\bm\Phi(\bzero)=\bzero$. If
\[\bm\Phi(\bU\bw_1+\bu(\bw_1,\bm\Phi(\bw_1)))-\left(\bV\bm\Phi(\bw_1) +\bv(\bw_1,\bm\Phi(\bw_1))\right)=\mathcal{O}(\Vert \bw_1 \Vert^q)\]  as $\bw_1\to \bzero$ for some $q>1$, then $\bh(\bw_1)=\bm\Phi(\bw_1)+\mathcal{O}(\Vert \bw_1 \Vert^q)$ as $\bw_1\to \bzero$.
\end{thm}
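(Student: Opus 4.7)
The idea is to view the statement as a uniqueness-up-to-order-$q$ theorem for the invariance equation that defines the center manifold. Introduce the \emph{defect operator}
\[T[\bm\Psi](\bw_1) := \bm\Psi\bigl(\bU\bw_1 + \bu(\bw_1,\bm\Psi(\bw_1))\bigr) - \bV\bm\Psi(\bw_1) - \bv(\bw_1,\bm\Psi(\bw_1))\]
on $\mathcal{C}^1$-maps $\bm\Psi\colon\R^m\to\R^l$ with $\bm\Psi(\bzero)=\bzero$ and $\bD\bm\Psi(\bzero)=\bzero$. The invariance identity derived just before the theorem statement says $T[\bh]\equiv\bzero$, while the hypothesis is exactly $T[\bm\Phi](\bw_1)=\mathcal{O}(\|\bw_1\|^q)$. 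The task therefore reduces to transferring the vanishing order from $T[\bm\Phi]$ to $\bm\Delta:=\bh-\bm\Phi$.

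First, I would subtract the two defect relations. Writing $\bm\phi(\bw_1):=\bU\bw_1+\bu(\bw_1,\bh(\bw_1))$ and $\tilde{\bm\phi}(\bw_1):=\bU\bw_1+\bu(\bw_1,\bm\Phi(\bw_1))$, and then adding and subtracting $\bm\Phi(\bm\phi(\bw_1))$ to split $\bh(\bm\phi(\bw_1))-\bm\Phi(\tilde{\bm\phi}(\bw_1))$ into $\bm\Delta(\bm\phi(\bw_1))$ plus a composition-mismatch term, the subtraction yields the recursive bound
\begin{align*}
\|\bm\Delta(\bm\phi(\bw_1))\| &\le \|\bV\|\,\|\bm\Delta(\bw_1)\| + \|\bm\Phi(\bm\phi(\bw_1))-\bm\Phi(\tilde{\bm\phi}(\bw_1))\| \\
&\quad + \|\bv(\bw_1,\bh(\bw_1))-\bv(\bw_1,\bm\Phi(\bw_1))\| + C\|\bw_1\|^q.
\end{align*}
Because $\bD\bm\Phi(\bzero)=\bzero$, $\bD\bu(\bzero,\bzero)=\bzero$, and $\bD\bv(\bzero,\bzero)=\bzero$, the mean value theorem makes each of the two middle terms bounded by $\varepsilon\,\|\bm\Delta(\bw_1)\|$ on a sufficiently small neighborhood of the origin, with $\varepsilon>0$ arbitrarily small.

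Second, I would iterate this recursion backwards in time using adapted norms. Since the eigenvalues of $\bU$ lie on the unit circle, $\bU$ is invertible and $\bm\phi$ is a local $\mathcal{C}^1$-diffeomorphism fixing the origin with derivative $\bU$ there. Choose a norm on $\R^l$ so that $\|\bV\|\le\mu<1$ and a norm on $\R^m$ so that $\|\bm\phi^{-1}(\bw_1)\|\le(1+\eta)\|\bw_1\|$ on a small neighborhood; because $q>1$, $\mu<1$, and both $\varepsilon$ and $\eta$ can be made arbitrarily small, one can fix them so that $\kappa:=(\mu+\varepsilon)(1+\eta)^q<1$. Iterating the recursion $n$ times along $\bm\phi^{-1}$ and using $\|\bm\phi^{-k}(\bw_1)\|\le(1+\eta)^k\|\bw_1\|$ gives
\[\|\bm\Delta(\bw_1)\|\le(\mu+\varepsilon)^n M + C'\,\|\bw_1\|^q\sum_{k=0}^{n-1}\kappa^k,\]
where $M$ uniformly bounds $\|\bm\Delta\|$ on the working neighborhood. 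Sending $n\to\infty$ annihilates the first term and leaves a convergent geometric series in the second, producing $\bm\Delta(\bw_1)=\mathcal{O}(\|\bw_1\|^q)$ as claimed.

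The principal obstacle is the interplay between the non-hyperbolic direction, where $\bU$ may mildly expand, and the stable direction, where $\bV$ contracts: the controlling factor is $\kappa=(\mu+\varepsilon)(1+\eta)^q$, and closing the estimate requires $\kappa<1$. This is exactly where the hypothesis $q>1$ is consumed—it provides the slack that allows $(1+\eta)^q$ to stay below $1/(\mu+\varepsilon)$ once the norms and neighborhood have been refined. A secondary technical point is ensuring that the backward iterates $\bm\phi^{-n}(\bw_1)$ remain inside the domain $\mathcal{M}$ on which the center-manifold construction is valid; this is handled by shrinking the working neighborhood once at the outset and observing that $\bm\phi^{-1}$ maps it into itself with controlled expansion factor $1+\eta$.
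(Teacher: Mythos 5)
The paper does not prove this statement: it is quoted verbatim from \cite[Theorem 7]{carr1982}, and the standard proof there runs the approximation estimate through the contraction-mapping construction of the center manifold itself (showing the fixed-point operator preserves the set of graphs lying within $\mathcal{O}(\norm{\bw_1}^q)$ of $\bm\Phi$). Your direct a~posteriori argument---subtract the exact invariance identity $T[\bh]\equiv\bzero$ from the approximate one for $\bm\Phi$, absorb the composition-mismatch and $\bv$-difference terms into $\varepsilon\norm{\bm\Delta(\bw_1)}$ using the vanishing derivatives at the origin, and unwind the resulting affine recursion along the backward orbit of $\bm\phi$---is a legitimate alternative route, and the recursive inequality you derive is correct.

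The closing step, however, fails as written. Since the eigenvalues of $\bU$ lie on the unit circle, $\bm\phi^{-1}$ is (in the adapted norm) an expansion by a factor up to $1+\eta>1$; it therefore does \emph{not} map your working neighborhood into itself, contrary to your final remark, and the backward iterates $\bm\phi^{-k}(\bw_1)$ will in general leave the region on which the bounds $\norm{\bD\bv}\le\varepsilon$, $\norm{\bD\bm\Phi}\cdot\norm{\bD\bu}\le\varepsilon$, $\norm{\bm\Delta}\le M$, and indeed the very definition of $\bh$, are valid. Consequently you cannot simply send $n\to\infty$. The repair is standard but must be made explicit: stop at the exit time $n(\bw_1)$, the largest $n$ with $(1+\eta)^n\norm{\bw_1}\le\delta$, so that $(1+\eta)^{-n(\bw_1)}\le(1+\eta)\norm{\bw_1}/\delta$; the leftover term then satisfies $(\mu+\varepsilon)^{n(\bw_1)}M=\kappa^{n(\bw_1)}\bigl((1+\eta)^{-n(\bw_1)}\bigr)^{q}M\le M\bigl((1+\eta)/\delta\bigr)^{q}\norm{\bw_1}^{q}$ precisely because $\kappa\le1$, while the truncated geometric series is handled as you indicate. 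Note also that $q>1$ is not what makes $\kappa<1$ achievable---$(1+\eta)^q\to1$ as $\eta\to0$ for any fixed $q>0$---so your claim that this is where $q>1$ is ``consumed'' is misplaced; its role is only to make the conclusion stronger than what $\bD\bh(\bzero)=\bD\bm\Phi(\bzero)=\bzero$ already yields.
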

\subsection{Stability of numerical schemes with linear invariants}
In this subsection we make use of the center manifold theory to investigate the stability of fixed points $\by^*$ of a numerical scheme $\by^{n+1}=\bg(\by^n)$ with $\bg\from D\to D$ and $D\tm \R^N$. Thereby, we assume that there exists a neighborhood $\mathcal D\tm D$ of $\by^*$ such that $\bg\big|_\mathcal{D}\in \mathcal C^1$ has first derivatives that are Lipschitz continuous on $\mathcal{D}$, so we can apply Theorem \ref{Thm:Ex_CM}. Based on this assumption, the following Theorem~\ref{Thm_MPRK_stabil} yields a sufficient condition for the Lyapunov stability of $\by^*$ based on the eigenvalues of the corresponding Jacobian $\bD\bg(\by^*)$. If $\bg$ in addition conserves all linear invariants of $\bA$ from \eqref{eq:Dahlquist_System}, i.\,e.\ $\bn^T\bg(\by)=\bn^T\by$ for all $\by\in D$ whenever $\bn^T\bA=\bzero$, then Theorem \ref{Thm_MPRK_stabil} also states that the numerical scheme locally convergences towards the unique steady state $\by^*$ of \eqref{eq:Dahlquist_System}, \eqref{eq:IC}.

Compared to \cite[Theorem 2.9]{IKM2122}, it is worth mentioning that due to Theorem \ref{Thm_MPRK_stabil} we  are no longer restricted to focus on positive as well as conservative schemes, since the new statement allows for considering a general map $\bg:D\to D$. 
An essential advantage is given by weakening the assumption $\bg\in \mathcal C^2$ to the requirement that $\bg$ has to represent a $\mathcal C^1$-map with Lipschitz continuous first derivatives, which opens up a much wider application, where now in particular even GeCo methods \cite{MR4087156} as well as BBKS schemes \cite{BBKS2007,BRBM2008,MR4109346} can be investigated. 

For a compact notation we introduce the matrix
\begin{equation}\label{eq:N}
\bN=\begin{pmatrix}
\bn_1^T\\
\vdots\\
\bn_k^T
\end{pmatrix}\in \R^{k\times N} 
\end{equation}
with $\bn_1,\dotsc,\bn_k$ being a basis of $\ker(\bA^T)$ as well as the set
\begin{equation}
H=\{\by\in \R^N\mid \bN\by=\bN\by^*\}\label{eq:H}
\end{equation}
and point out that $\by\in H\cap D$ implies $\bg(\by)\in H\cap D$, if and only if $\bg$ conserves all linear invariants.

\begin{thm}\label{Thm_MPRK_stabil}
Let $\bA\in \R^{N\times N}$ such that $\ker(\bA)=\Span(\bv_1,\dotsc,\bv_k)$ represents a $k$-dimensional subspace of $\R^N$ with $k>0$. Also, let $\by^*\in \ker(\bA)$ be a fixed point of $\bg\from D\to D$ where $D\tm \R^N$ contains a neighborhood $\mathcal D$ of $\by^*$. Moreover, let any element of $C=\ker(\bA)\cap \mathcal D$ be a fixed point of $\bg$ and suppose that $\bg\big|_\mathcal{D}\in \mathcal C^1$ as well as that the first derivatives of $\bg$ are Lipschitz continuous on $\mathcal{D}$. Then $\bD\bg(\by^*)\bv_i=\bv_i$ for $i=1,\dotsc, k$ and the following statements hold.
\begin{enumerate}
\item\label{it:Thma} If the remaining $N-k$ eigenvalues of $\bD\bg(\by^*)$ have absolute values smaller than $1$, then $\by^*$ is stable.\label{It:Thm_Stab_a}
\item\label{it:Thmb} Let $H$ be defined by \eqref{eq:H} and $\bg$ conserve all linear invariants, which means that $\bg(\by)\in H\cap D$ for all $\by\in H\cap D$. If additionally the assumption of \ref{It:Thm_Stab_a} is satisfied, then there exists a $\delta>0$ such that $\by^0\in H\cap D$ and $\norm{\by^0-\by^*}<\delta$ imply $\by^n\to \by^*$ as $n\to \infty$.
\end{enumerate}
\end{thm}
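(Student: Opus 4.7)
My strategy is to move $\by^*$ to the origin via an affine change of coordinates, bring $\bg$ into the form \eqref{Form_of_G}, and apply the center manifold machinery of Theorems \ref{Thm:Ex_CM} and \ref{Thm:Stab_CM}. The crucial structural input is that the hypotheses guarantee a $k$-dimensional affine plane of fixed points through $\by^*$ sitting inside $C=\ker(\bA)\cap\mathcal D$, which will allow me to choose $\bh\equiv\bzero$ as a center manifold and thus reduce both statements to elementary computations.

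I first verify $\bD\bg(\by^*)\bv_i=\bv_i$: for $|t|$ small, the curve $\by^*+t\bv_i$ lies in $C$ and is therefore a fixed point, so differentiating $\bg(\by^*+t\bv_i)=\by^*+t\bv_i$ at $t=0$ gives the claim. Next, set $W=\ker(\bA)$ and let $W'$ be the $\bD\bg(\by^*)$-invariant subspace associated with the remaining $N-k$ eigenvalues, which have modulus $<1$ by assumption. Since $\lambda=1$ has geometric multiplicity at least $k$ and algebraic multiplicity at most $k$, it is semisimple with eigenspace equal to $W$, so $\R^N=W\oplus W'$. With $\bV_1=[\bv_1\mid\cdots\mid\bv_k]$ and $\bV_2$ a basis of $W'$, put $\bT=[\bV_1\mid\bV_2]$ and $\bG(\bw)=\bT^{-1}(\bg(\by^*+\bT\bw)-\by^*)$ on a neighborhood $\mathcal M$ of $\bzero$. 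Then $\bG\in\mathcal C^1$ has Lipschitz derivative, $\bD\bG(\bzero)$ is block diagonal with $\bU=\bI_k$ and $\bV$ of spectral radius $<1$, and splitting $\bw=(\bw_1,\bw_2)^T$ accordingly puts $\bG$ into the form \eqref{Form_of_G}. Stability of $\by^*$ for $\bg$ is equivalent to stability of $\bzero$ for $\bG$ by the affine nature of the transformation.

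For part \ref{it:Thma}, I observe that for every small $\bw_1\in\R^k$ the point $\by^*+\bV_1\bw_1$ belongs to $C$ and is thus a fixed point, so $\bG(\bw_1,\bzero)=(\bw_1,\bzero)^T$. Hence $\bu(\bw_1,\bzero)\equiv\bzero$ and $\bv(\bw_1,\bzero)\equiv\bzero$ near $\bzero$, which makes $\bh\equiv\bzero$ a valid center manifold in the sense of Theorem \ref{Thm:Ex_CM}; the reduced map \eqref{Flow_center_manifold} simplifies to the identity $\mathcal G(\bw_1)=\bw_1$, whose fixed point $\bzero$ is trivially Lyapunov stable. Theorem \ref{Thm:Stab_CM} then transfers this stability to $\bG$ and hence to $\bg$.

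For part \ref{it:Thmb} I additionally use the invariant-preservation identity: differentiating $\bN\bg(\by)=\bN\by$ at $\by^*$ gives $\bN\bD\bg(\by^*)=\bN$. Propagating this along a Jordan chain of any eigenvalue $\mu\neq 1$ forces $\bN\bw=\bzero$ on every generalized eigenvector, so $W'\subseteq\ker(\bN)$; equality follows from $\dim W'=N-k=\dim\ker(\bN)$. Choosing $\bV_2$ to be a basis of $\ker(\bN)$, the $k\times k$ matrix $\bN\bV_1$ is invertible (because $W\cap W'=\{\bzero\}$), so $\by\in H$ translates to $\bw_1=\bzero$ in the new coordinates. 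Since $\bg$ preserves $H\cap D$, one has $\bw_1^n=\bzero$ for all $n$ whenever $\by^0\in H\cap D$, and the iteration reduces to $\bw_2^{n+1}=\bV\bw_2^n+\bv(\bzero,\bw_2^n)$. Picking an operator norm with $\|\bV\|<1$ and using $\bD\bv(\bzero,\bzero)=\bzero$ together with the Lipschitz continuity of $\bD\bv$ to dominate $\|\bv(\bzero,\bw_2)\|$ by $\tfrac{1-\|\bV\|}{2}\|\bw_2\|$ on a small enough ball yields a strict contraction in $\bw_2$, and hence $\bw_2^n\to\bzero$, i.e.\ $\by^n\to\by^*$. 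The step I expect to be most delicate is the identification $W'=\ker(\bN)$: propagating $\bN\bD\bg(\by^*)=\bN$ cleanly along Jordan chains and aligning the spectral and kernel decompositions so that $H$ becomes precisely the slice $\bw_1=\bzero$ is where the algebra of linear invariants must be threaded through the spectral structure of $\bD\bg(\by^*)$; the remainder is a direct application of the center manifold theorems, substantially simplified by the fact that $\bh\equiv\bzero$ is a center manifold.
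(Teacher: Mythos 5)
Your proposal is correct, and for the preliminary claim and part \ref{it:Thma} it follows essentially the same path as the paper: differentiate along the fixed-point curve $\by^*+t\bv_i$ to get $\bD\bg(\by^*)\bv_i=\bv_i$, conjugate by the affine map built from a basis adapted to $\ker(\bA)\oplus W'$, observe that $\bG(\bw_1,\bzero)=(\bw_1,\bzero)^T$ because the slice $\bw_2=\bzero$ consists of fixed points, take $\bh\equiv\bzero$ as center manifold, and note that the reduced map is the identity. One small remark there: you declare $\bh\equiv\bzero$ to be ``a valid center manifold'' by inspection, whereas the paper justifies this within its own framework by applying the approximation result (Theorem \ref{Thm:Comp_func_h}) to $\bm\Phi\equiv\bzero$; since Theorem \ref{Thm:Stab_CM} is stated for the $\bh$ delivered by the existence theorem, that extra step is the clean way to close the loop, though your claim is standard. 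Where you genuinely diverge is the conclusion of part \ref{it:Thmb}. The identification of the complementary generalized eigenspace with $\ker(\bN)=\im(\bA)$ via $\bN\bD\bg(\by^*)=\bN$ propagated along Jordan chains is exactly the paper's computation, but the paper then feeds the resulting inclusion $\bw^n\in\Span(\be_{k+1},\dotsc,\be_N)$ into the local attractivity statement of Theorem \ref{Thm:Ex_CM}~\ref{item:attractivity} (iterates approach the center manifold $\bw_2=\bzero$) and intersects the two subspaces to force $\bw^n\to\bzero$. You instead restrict the iteration to the invariant slice $\bw_1=\bzero$ (which the conservation of $H$ makes legitimate, since it also forces $\bu(\bzero,\cdot)=\bzero$ there) and run a direct contraction estimate using an operator norm with $\norm{\bV}<1$ and $\bD\bv(\bzero,\bzero)=\bzero$. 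Your route is self-contained — it does not need the attractivity half of the center manifold theorem at all — and yields a geometric convergence rate for free; the paper's route reuses the machinery it has already set up and avoids the norm-equivalence and remainder bookkeeping. Both arguments are sound.
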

Before we prove the above theorem we want to emphasize in the next remark that its application is not restricted to linear systems of differential equations \eqref{eq:Dahlquist_System}.
\begin{rem}\label{Rem:NonlinearAppl}
Let us consider a general system of ordinary differential equations $\by'=\bf(\by)\in \R^N$ with $k>0$ linear invariants determined by $\bn_1,\dotsc,\bn_k$ and a $k$--dimensional subspace $\mathcal V=\Span(\bv_1,\dotsc,\bv_k)\tm\{\by\in \R^N\mid \bf(\by)=\bzero\}$. Consequently, we can construct a matrix $\bA$ such that $\ker(\bA)=\mathcal V$ and $\ker(\bA^T)=\Span(\bn_1,\dotsc,\bn_k)$ as follows, and thus apply Theorem \ref{Thm_MPRK_stabil}.

As $\bA$ is uniquely determined by its operation on a basis of $\R^N$ we first set $\bA\bv_i=\bzero$  for $i=1,\dotsc,k$  so that $\ker(\bA)=\mathcal V$ is satisfied. To find an expression for $\im(\bA)$ we make use  of  $\im(\bA)=(\ker(\bA^T))^\perp=(\Span(\bn_1,\dotsc,\bn_k))^\perp$. Using the matrix notation \eqref{eq:N}, this means that $\bs\in \im(\bA)$ if and only if $\bN\bs=\bzero$, or equivalently $\bs\in \ker(\bN)$. Since $\dim(\ker(\bN))=N-k$, there exist linearly independent vectors $\bs_1,\dotsc,\bs_{N-k}$ with $\im(\bA)=\Span(\bs_1,\dotsc,\bs_{N-k})$, and hence, there exist linearly independent vectors $\bw_1,\dotsc,\bw_{N-k}$ such that $\bA\bw_i=\bs_i$ for $i=1,\dotsc, N-k$. As a consequence, setting $\mathcal{W}=\Span(\bw_1,\dotsc,\bw_{N-k})\tm \R^N$ yields $\mathcal{V}\oplus\mathcal{W}=\R^N$.
Altogether, $\mathcal V$ and $\mathcal W$ uniquely determine  the matrix $\bA$ satisfying $\ker(\bA)=\mathcal V$ and $\ker(\bA^T)=\Span(\bn_1,\dotsc,\bn_k)$. Hence, Theorem \ref{Thm_MPRK_stabil} is not restricted to linear systems.
\end{rem}

\begin{proof}[Proof of Theorem \ref{Thm_MPRK_stabil}]
First, we show $\bD\bg(\by^*)\bv_i=\bv_i$ for $i=1,\dotsc,k$. Since $\bg$ is differentiable in $\by^*\in \mathcal D$ the directional derivatives $\partial_\bv \bg(\by^*)=\bD\bg(\by^*)\bv$ exist for all directions $\bv\in\R^N$ and for $i=1,\dotsc,k$ we find
\[\bD\bg(\by^*)\bv_i=\partial_{\bv_i}\bg(\by^*)=\lim_{h\to 0}\frac1h\bigl(\bg(\by^*+h\bv_i)-\bg(\by^*)\bigr). \]
For $\lvert h\rvert$ small enough, we see that $\by^*+h\bv_i\in C$ because of the following. First of all $\by^*+h\bv_i\in\ker(\bA)$ holds for all $h\in\R$, so that we have to show that $\by^*+h\bv_i\in\mathcal D$ for $\lvert h\rvert$ small enough. Since $\by^*\in \mathcal{D}$, there exists a $\gamma>0$ such that the open ball $B_\gamma(\by^*)$ with center $\by^*$ and radius $\gamma$ satisfies $B_\gamma(\by^*)\tm \mathcal D$. Choosing $\lvert h\rvert<\frac{\gamma}{\Vert \bv_i\Vert}$ we find \[ \Vert \by^*+h\bv_i-\by^*\Vert \leq \lvert h\rvert \Vert \bv_i\Vert <\gamma,\] such that $\by^*+h\bv_i\in\ker(\bA)\cap B_\gamma(\by^*)\tm \ker(\bA)\cap\mathcal D=C$ is a fixed point of $\bg$. Hence,
\[\bD\bg(\by^*)\bv_i=\lim_{h\to 0}\frac1h\bigl(\by^*+h\bv_i-\by^*\bigr)=\bv_i, \]
which shows that $\bv_i$ is an eigenvector of $\bD\bg(\by^*)$ with associated eigenvalue $1$.
Thus, the spectrum of $\bD\bg(\by^*)$ contains the eigenvalue $1$ with a multiplicity of at least $k$.

\begin{enumerate} 
\item\label{item:stability}
We now assume that the remaining $N-k$ eigenvalues  of $\bD\bg(\by^*)$ have absolute values smaller than 1.
Next we introduce the matrix of generalized eigenvectors $\bS$ where the first $k$ columns are given by the basis vectors $\bv_1,\dotsc, \bv_k$ of $\ker(\bA)$. Thus, we obtain
\begin{equation}\label{eq:DG_digaonal}
\bS^{-1}\bD\bg(\by^*)\bS=\bJ
\end{equation}
with the Jordan normal form $\bJ$ of $\bD\bg(\by^*)$. We want to point out that the upper left $k\times k$ block of $\bJ$ is the identity matrix, since the $k$ basis vectors $\bv_1,\dotsc, \bv_k$ of $\ker(\bA)$ are eigenvectors with associated eigenvalue $1$.

We want to use the Theorem~\ref{Thm:Ex_CM}~\ref{item:existence} in combination with Theorem~\ref{Thm:Stab_CM} to conclude that $\by^*$ is a stable fixed point.
The theorems require a map $\bG$ of form \eqref{Form_of_G}, which shall be obtained from $\bg$ by means of an affine linear transformation.
We consider the affine transformation \[\bT\from\R^N\to\R^N,\quad \by\mapsto\bw=\bT(\by)=\bS^{-1}(\by-\by^*),\] where
the inverse transformation $\bT^{-1}$ is given by $\bT^{-1}(\bw)=\bS\bw+\by^*$.
By construction, $\ker(\bA)$ is mapped onto the subspace spanned by the first $k$ unit vectors $\be_1,\dotsc,\be_k$ of $\R^N$, as for $\by^*=\sum_{i=1}^kt_i\bv_i\in \ker(\bA)$ we find
\begin{equation*}
\begin{aligned}
\bT\left(\sum_{i=1}^kr_i\bv_i\right)&=\bS^{-1}\left(\sum_{i=1}^kr_i\bv_i-\by^*\right)=\bS^{-1}\left(\sum_{i=1}^k(r_i-t_i)\bv_i\right)\\
&=\sum_{i=1}^k(r_i-t_i)\bS^{-1}\bv_i=\sum_{i=1}^k(r_i-t_i)\be_i
\end{aligned}
\end{equation*}
for arbitrary choices of $r_1,\dotsc,r_k\in \R$.
In particular, $\by^*$ is mapped to the origin.

In order to use Theorem \ref{Thm:Ex_CM}, we have  to define an appropriate $\mathcal C^1$-map $\bG\from \mathcal M\to \R^N$. Therefore we define $\mathcal M=\bT(\mathcal D)$ which is a neighborhood of the origin since $\bT$ is an invertible affine linear map. In particular, we use
\begin{equation}
\bG\from \bT(\mathcal D)\to \R^N,\quad \bG(\bw) = \bT(\bg(\bT^{-1}(\bw)))\label{eq:TgTinv}
\end{equation}
and observe that the origin is a fixed point of $\bG$. To represent $\bG$ in the form \eqref{Form_of_G}, we use $\bg(\by^*)=\by^*$ and write $\bg$ as
\begin{equation}
\begin{aligned}
\bg(\by)&=\bg(\by^*)+\bD\bg(\by^*)(\by-\by^*)+\bR(\by)\\&=\by^*+\bD\bg(\by^*)(\by-\by^*)+\bR(\by),\label{yn+1_Lagrange_remainder}
\end{aligned}
\end{equation}
where the remainder $\bR(\by)$ can be written as
\begin{align}
\bR(\by)=\bg(\by)-\by^*-\bD\bg(\by^*)(\by-\by^*).\label{Lagrange_remainder}
\end{align}
In particular, we have
\begin{equation}
{\bR}(\by^*)=\bzero,\quad
\bD {\bR}(\by^*)=\bzero.\label{R(0)=0,DR(0)=0}
\end{equation}
By inserting \eqref{yn+1_Lagrange_remainder} in \eqref{eq:TgTinv} we obtain
\begin{equation*}
\begin{aligned}
\bG(\bw) &= \bS^{-1}\bigl(\bD\bg(\by^*)(\bT^{-1}(\bw)-\by^*)+\bR(\bT^{-1}(\bw))\bigr)\\
&=\bS^{-1}\bD\bg(\by^*)\bS\bw + \bS^{-1}\bR(\bT^{-1}(\bw))
\end{aligned}
\end{equation*}
and using \eqref{eq:DG_digaonal} yields
\begin{equation}\label{eq:G}
\bG(\bw) = \bJ\bw + \bS^{-1}\bR(\bT^{-1}(\bw))=\begin{pmatrix}
\bI &\\
& \bm R 
\end{pmatrix}\bw + \bS^{-1}\bR(\bT^{-1}(\bw)),
\end{equation}
where $\bI\in \R^{k\times k}$ and $\bm R\in \R^{(N-k) \times (N-k)}$ and $\rho(\bm R)<1$ as $N-k$ eigenvalues of $\bD\bg(\by^*)$ have absolute values smaller than $1$.
Setting $\bw=(\bw_1,\bw_2)^T$ with $\bw_1\in \R^{k}$, $\bw_2\in \R^{N-k}$ and $(\bw_1,\bw_2)\in \bT(\mathcal D)$, \eqref{eq:G} can be rewritten as
\begin{equation}\label{eq:G_form}
\bG(\bw_1,\bw_2)=\Vec{\bU \bw_1+\bu(\bw_1,\bw_2)\\\bV \bw_2+\bv(\bw_1,\bw_2)}
\end{equation} 
with
\begin{equation}\label{eq:UVuv}
\begin{aligned}
\bU&=\bI,  & \bu(\bw_1,\bw_2)&=\bigl(\bS^{-1}{\bR}(\bT^{-1}(\bw_1,\bw_2))\bigr)_{1:k},\\
\bV&= \bm R, & \bv(\bw_1,\bw_2)&=\bigl(\bS^{-1}{\bR}(\bT^{-1}(\bw_1,\bw_2))\bigr)_{k+1:N,}
\end{aligned}
\end{equation}
where we defined $\bv_{l:m}=(v_l,\dotsc,v_m)^T$ for a vector $\bv$ and $l\leq m$.
Each eigenvalue of $\bU$ has absolute value 1 and those of $\bV$ have absolute values smaller than $1$. Furthermore, utilizing $\bT^{-1}(\bzero,\bzero)=\by^*$ we conclude from \eqref{R(0)=0,DR(0)=0} that
$\bu(\bzero,\bzero) = \bv(\bzero,\bzero)=\bzero$, since $\bR(\by^*)=\bzero$, and $\bD \bu(\bzero,\bzero) = \bD \bv(\bzero,\bzero)=\bzero$, since $\bD\bR(\by^*)=\bzero$.
Altogether this demonstrates that \eqref{eq:G} is of form \eqref{Form_of_G}, which is necessary for applying the center manifold theory.

Now, the center manifold theorem~\ref{Thm:Ex_CM}~\ref{item:existence} states that for some $\epsilon>0$ there exists a $\mathcal{C}^1$ function $\bh\from\R^k\to~ \R^{N-k}$ with $\bh(\bzero)=\bzero$ and $\bD\bh(\bzero)=\bzero$, such that $(\bw_1^1,\bw_2^1)^T=\bG(\bw_1^0,\bh(\bw_1^0))$  implies $\bw_2^1=\bh(\bw_1^1)$ for $\norm{\bw_1^0} ,\norm{\bw_1^1}<\epsilon$. 

In the following we make use of the fact that the center manifold is given by 
\begin{equation}
\{(\bw_1,\bw_2)\in\R^N\mid \bw_2=\bzero,\ \Vert \bw_1\Vert <\epsilon\},\label{eq:CM}
\end{equation}
i.\,e.\ $\bh(\bw_1)=\bzero$, for a sufficiently small $\epsilon>0$, which can be shown with Theorem~\ref{Thm:Comp_func_h}.
The function $\bm\Phi\from\R^k\to\R^{N-k}$, $\bm\Phi(\bw_1)=\bzero$ satisfies $\bm\Phi(\bzero)=\bzero$ and $\bD\bm\Phi(\bzero)=\bzero$. In order to compute $\bh$ we first prove that all points $(\bw_1,\bzero)\in \bT(\mathcal D)$ are fixed points of $\bG$. Note, that points $(\bw_1,\bzero)\in \bT(\mathcal D)$ even satisfy
\begin{equation*}
\begin{aligned}
\bT^{-1}(\bw_1,\bzero)&=\bT^{-1}(\sum_{i=1}^k(\bw_1)_i\be_i)=\sum_{i=1}^k(\bw_1)_i\bS\be_i+\by^*=\sum_{i=1}^k(\bw_1)_i\bv_i+\by^*\in \mathcal D\cap \ker(\bA)=C.
\end{aligned}
\end{equation*}
Hence, we find 
\begin{equation}\label{eq:G(w1,0)=(w1,0)}
\bG(\bw_1,\bzero)=  \bT\left(\bg\left(\bT^{-1}\left(\bw_1,\bzero\right)\right)\right)
=\bT\left(\bT^{-1}\left(\bw_1,\bzero\right)\right)=(\bw_1,\bzero)^T.
\end{equation}
Thus, it follows that
\begin{equation*}
\begin{aligned}
\bm\Phi(\bU \bw_1+\bu(\bw_1,\bm\Phi(\bw_1)) &- \left(\bV \bm\Phi(\bw_1)+\bv(\bw_1,\bm\Phi(\bw_1))\right)\overset{\eqref{eq:G_form}}{=}-(\bG(\bw_1,\bzero))_{k+1:N}=\bzero.
\end{aligned}
\end{equation*}
By Theorem~\ref{Thm:Comp_func_h}, $\bm\Phi$ is an approximation of $\bh$ for any order $q>1$. Thus, 
\begin{equation*}
\bh(\bw_1)=\bm\Phi(\bw_1)=\bzero \text{ for }\norm{\bw_1}<\epsilon.
\end{equation*}
To investigate the stability of $\by^*$, we can now consider the map
\begin{equation*}
\mathcal  G(\bw_1)=\bU \bw_1 + \bu(\bw_1,\bh(\bw_1))=\bU \bw_1 + \bu(\bw_1,\bzero)
\end{equation*}
for $\norm{\bw_1}<\epsilon$, where $\bU$ and $\bu$ are given in \eqref{eq:UVuv}.
According to Theorem~\ref{Thm:Stab_CM}, the fixed point $\bzero\in\R^N$ of $\bG$ is stable, if the fixed point $\bzero\in\R^k$ is a stable fixed point of $\mathcal G$.
From \eqref{eq:G(w1,0)=(w1,0)} we see
\begin{equation*}
\mathcal G(\bw_1)=\left(\bG(\bw_1,\bzero)\right)_{1:k}=\bw_1,
\end{equation*}
which implies $\bw_1^{n}=\mathcal G(\bw_1^{n-1})=\bw_1^0$ for all $n\in\N$ and every $\bw_1^0$ with $\norm{\bw_1^0}<\epsilon$. Consequently, for every $\widetilde\epsilon>0$ we define $\widetilde\delta = \min(\widetilde\epsilon,\epsilon)$ to obtain that $\norm{\bw_1^0}<\widetilde\delta$ implies $\norm{\bw_1^n}=\norm{\bw_1^0}<\widetilde\delta\leq \widetilde\epsilon$. Thus, $\bzero\in \R^k$ is a stable fixed point of $\mathcal G$ in the sense of Definition~\ref{Def_Lyapunov_Diskr}~\ref{def:stab}.
Furthermore, by Theorem~\ref{Thm:Stab_CM} the fixed point $\bzero\in\R^N$ of $\bG$ is stable as well.

As a last step, we show that the above conclusions imply that $\by^*$ is a stable fixed point of $\bg$.
We know that $\bzero$ is a stable fixed point of the iteration scheme $\bw^{n+1}=\bG(\bw^n)$, that is for every $\epsilon_w>0$ exists $\delta_w>0$ such that $\norm{\bw^0}<\delta_w$ implies $\norm{\bw^n}<\epsilon_w$. Now, let $\epsilon>0$ be arbitrary, we define $\epsilon_w=\epsilon/\norm{\bS}$ and  $\delta=\delta_w/\norm{\bS^{-1}}$. Hence, if $\norm{\by^0-\by^*}<\delta$, then \[\norm{\bw^0}=\norm{\bT(\by^0)}=\norm{\bS^{-1}(\by^0-\by^*)}\leq \norm{\bS^{-1}}\norm{\by^0-\by^*}<\norm{\bS^{-1}}\delta=\delta_w\] and consequently $\norm{\bw^n}<\epsilon_w$. Furthermore, $\bw^n=\bT(\by^n)=\bS^{-1}(\by^n-\by^*)$ is equivalent to $\bS\bw^n=\by^n-\by^*$ and hence,  $\norm{\by^n-\by^*}\leq\norm{\bS}\norm{\bw^n}<\norm{\bS}\epsilon_w=\epsilon$. Thus, we have shown that $\by^*$ is a stable fixed point of the iteration scheme $\by^{n+1}=\bg(\by^n)$.
\item
Recall from \eqref{eq:H} that $H=\{ \by\in \R^N\mid \bN\by=\bN\by^*\}$ and let $\by^0\in H\cap D$, where $\bN$ is given by \eqref{eq:N}. Note, that $\dim(H)=N-k$ as $\bN$ has rank $k$, and $\by^n\in H$ for all $n\in \N_0$ since $\bg(\by)\in H$ for all $\by\in H\cap D$. Moreover, for all $\by\in H$ we find \[(\by-\by^*)\perp \ker(\bA^T)=\Span(\bn_1,\dotsc,\bn_k)\] since $\bN(\by-\by^*)=\bN\by^*-\bN\by^*=\bzero$. Hence $\by^n-\by^* \in (\ker(\bA^T))^\perp=\im(\bA)$ for all $n\in \N_0$.  We now want to show that the last $N-k$ column vectors of the invertible matrix $\bS=(\bv_1 \dotsc \bv_k \bv_{k+1} \dotsc \bv_N)$ of generalized eigenvectors associated with $\bD\bg(\by^*)$, see  \eqref{eq:DG_digaonal}, form a basis of $\im(\bA)$. Since $\bg$ conserves all linear invariants we observe
\begin{equation*}
\begin{aligned}
\bn_i^T\bD\bg(\by^*)\bv&=\lim_{h\to 0}\frac{1}{h}\Bigl(\bn_i^T\bg(\by^*+h\bv)-\bn_i^T\bg(\by^*)\Bigr)=\lim_{h\to 0}\frac{1}{h}\Bigl(\bn_i^T(\by^*+h\bv)-\bn_i^T\by^*\Bigr)=\bn_i^T\bv
\end{aligned}
\end{equation*}
for all $\bv\in \R^N$, and in particular we find
\begin{equation}
\bn_i^T(\bD\bg(\by^*)-\lambda\bI)\bv = \bn_i^T\bD\bg(\by^*)\bv - \lambda\bn_i^T\bv=(1-\lambda)\bn_i^T\bv.\label{eq:n_i(Dg(y*)-lambdaI)v}
\end{equation}
If $\bv$ is a generalized eigenvector of $\bD\bg(\by^*)$ corresponding to an eigenvalue $\lambda\neq 1$, so that \[(\bD\bg(\by^*)-\lambda\bI)^m\bv =\bzero\] is satisfied for some $m\in \N$, it follows from \eqref{eq:n_i(Dg(y*)-lambdaI)v} that
\begin{equation*}0=\bn_i^T(\bD\bg(\by^*)-\lambda\bI)^m\bv=(1-\lambda)\bn_i^T(\bD\bg(\by^*)-\lambda\bI)^{m-1}\bv=(1-\lambda)^m\bn_i^T\bv,
\end{equation*}
which implies $\bn_i^T\bv=0$ as $\lambda\neq 1$. Hence, all generalized eigenvectors $\bv$ corresponding to an eigenvalue $\lambda\neq 1$ are elements of $(\ker(\bA^T))^\perp=\im(\bA)$. Now note that $\bv_{k+1},\dotsc,\bv_N$ are $N-k$ generalized eigenvectors corresponding to eigenvalues of absolute value smaller than 1. Finally, since \[\dim(\im(\bA))=N-\dim(\ker(\bA))=N-k,\]
the vectors $\bv_{k+1},\dotsc,\bv_N$ form a basis of $\im(\bA)$. Since $\by^n-\by^*\in \im(\bA)=\Span(\bv_{k+1},\dotsc,\bv_N)$ there exist coefficients $\gamma^n_i\in \R$ such that for all $n\in \N_0$ we can write
\begin{align}
\by^n-\by^*=\sum_{i=k+1}^N \gamma^n_i\bv_i.\label{eq:yn-y*}
\end{align}

In order to prove the local convergence of the iterates $\by^n$ to $\by^*$ we  investigate the local convergence of $\bw^n$ to the origin. According to Theorem~\ref{Thm:Ex_CM}~\ref{item:attractivity}
the distance of the iterates $\bw^n\in\R^N$ from \ref{item:stability} to the center manifold given in \eqref{eq:CM} tends to zero for $n\to \infty$, if the iterates stay within a certain neighborhood of the origin. More precisely, this means that the sequence  $(\bw^n)_{n\in \N_0}$ approaches \[\{(\bw_1,\bw_2)\in \R^N\mid \norm{ \bw_1}<\epsilon,\bw_2=\bzero\} = \Span(\be_1,\dotsc,\be_k)\cap B_\epsilon(\bzero)\] for $n\to \infty$, if $\norm{ \bw^n}<\epsilon$ for $i=1,\dotsc, N$ and all $n\in \N_0$, where $\epsilon>0$ is sufficiently small. Now, since the origin is a stable fixed point of $\bG$, as shown in \ref{item:stability}, there exists $\widetilde{\delta}>0$ such that $\norm{\bw^0}<\widetilde{\delta}$ implies $\norm{\bw^n}<\epsilon$ for all $n \in\N_0$. Assuming $\norm{\bw^0}<\widetilde{\delta}$, we can conclude 
\begin{equation}\label{eq:limwn}
\lim_{n\to\infty}\bw^n\in \Span(\be_1,\dotsc,\be_k).
\end{equation}
Furthermore, from \eqref{eq:yn-y*} it follows
\begin{equation*}
\begin{aligned}
\bw^n=\bT(\by^n)&=\bS^{-1}(\by^n-\by^*)=\bS^{-1}\biggl(\sum_{i=k+1}^N \gamma^n_i\bv_i\biggr)=\sum_{i=k+1}^N \gamma^n_i\bS^{-1}\bv_i=\sum_{i=k+1}^N\gamma^n_i\be_i.
\end{aligned}
\end{equation*}
In particular this means $\bw^n\in \Span(\be_{k+1},\dots,\be_N)$, and hence, in combination with \eqref{eq:limwn} one obtains \[\lim_{n\to\infty}\bw^n\in \Span(\be_1,\dotsc,\be_k)\cap\Span(\be_{k+1},\dotsc,\be_N)=\{\bzero\},\] i.\,e.\ $\lim_{n\to\infty}\bw^n=\bzero$.  Due to the transformation $\bT$ this is equivalent to $\lim_{n\to\infty}\by^n=\by^*$ for $\by^0\in H\cap D$ satisfying $\norm{\by^0-\by^*}<\delta=\widetilde{\delta}/\norm{\bS^{-1}}$ since then \[\norm{\bw^0}=\norm{\bT(\by^0)}=\norm{\bS^{-1}(\by^0-\by^*)}\leq \norm{\bS^{-1}}\norm{\by^0-\by^*}<\widetilde{\delta}\] follows.\qed
\end{enumerate}
\end{proof}

\section{Stability of MPRK22($\alpha$) Schemes}\label{sec:stab_mprk22}
As a next step we follow the approach of \cite{IKM2122} to analyze MPRK22($\alpha$) schemes applied to \eqref{eq:Dahlquist_System} with positive steady state solutions $\by^*>\bzero$. First, we write the schemes as $\bM(\by^n)\by^{n+1}=\by^n$, which can be achieved as follows. Since we are focusing on the MPRK22 schemes 
from \eqref{eq:MPRK22b}, we define the matrices 
\begin{align}
\bB&=(\bI-\Delta t\alpha\bA)^{-1},\label{eq:B}\\
\bC&=\left(1-\frac{1}{2\alpha}\right)\bI+\frac{1}{2\alpha}\bB,\label{eq:C}
\end{align}
with the identity matrix $\bI\in \R^{N\times N}$ and system matrix $\bA$ from \eqref{eq:Dahlquist_System}. Note, that these matrices coincide with $\bB_\gamma$ and $\bC_\gamma$ for $\gamma=1$ and $N=2$ from \cite{IKM2122}. Next, we introduce the functions
\begin{align}
\bssigma_i:\R^N_{>0}\to\R^N_{>0}, \quad \bssigma_i(\by^n)&=(\bB\by)_i^{\frac{1}{\alpha}}(y_i)^{1-\frac{1}{\alpha}}, i=1,\dotsc, N,\label{eq:sigma}\\
\bm \tau_i:\R^N_{>0}\to\R^N_{>0}, \quad\bm \tau(\by^n)_i&=\frac{(\bC\by)_i}{\bssigma_i(\by)}, i=1,\dotsc, N,\label{eq:tau}\\
\bM:\R^N_{>0}\to\R^{N\times N},\quad \bM(\by)&=\bI-\Delta t\bA\diag\bigl(\bm \tau(\by)),\label{eq:M}
\end{align}
which are the straight forward extensions of the corresponding functions in \cite{IKM2122} to $N$ dimensions. With these notations we conclude from \cite[Proposition 3.1, Remark 3.2]{IKM2122}, that the MPRK22($\alpha)$ schemes applied to a conservative system \eqref{eq:Dahlquist_System} can be represented by
\begin{equation}\label{eq:Myn+1=yn}
\bM(\by^n)\by^{n+1}=\by^n.
\end{equation}
In particular, if $\bn\in \ker(\bA^T)$, then 
\[\bn^T\by^n=\bn^T\bM(\by^n)\by^{n+1}=\bn^T\by^{n+1}-\Delta t\bn^T\bA\diag(\bm \tau(\by))=\bn^T\by^{n+1},\]
which means that the MPRK22($\alpha$) schemes unconditionally conserve all linear invariants of the linear test equation.

We also want to point out that for $\by^*\in \ker(\bA)\cap \R^N_{>0}$ it follows from \eqref{eq:B} that $\bB^{-1}\by^*=(\bI-\Delta t\alpha\bA)\by^*=\by^*$, and hence, $\bB\by^*=\by^*$. Thus, from \eqref{eq:C} and \eqref{eq:sigma}, one immediately obtains $\bC\by^*=\by^*$ and $\sigma_i(\by^*)=y^*_i$ for $i=1,\dotsc N$. As a consequence we conclude with \eqref{eq:tau} that $\diag(\bm \tau(\by^*))=\bI.$ Altogether,  for $\by^*\in \ker(\bA)\cap \R^N_{>0}$, the relations
\begin{equation}
\bB\by^*=\by^*,\quad \bC\by^*=\by^*,\quad \sigma_i(\by^*)=y^*_i \text{ for $i=1,\dotsc N$}\quad\text{and}\quad \diag(\bm \tau(\by^*))=\bI\label{eq:diag(tau(y*))=I}
\end{equation}
hold.
Next, taking into account \eqref{eq:M} and \eqref{eq:diag(tau(y*))=I} leads to 
\begin{equation}
\bM(\by^*)=\bI-\Delta t\bA\label{eq:M(y*)},
\end{equation}
and hence, $\bM(\by^*)\by^*=\by^*$, from which it follows by  \eqref{eq:Myn+1=yn} that $\by^n=\by^*$ implies $\by^{n+1}=\by^*$, i.\,e.\ any $\by^*\in  \ker(\bA)\cap \R^N_{>0}$ is a fixed point of the MPRK22($\alpha$) schemes.

In \cite[Proposition 3.1]{IKM2122}, a function $\bg:\R^N_{>0}\to \R^N_{>0}$ was constructed such that $\by^{n+1}=\bg(\by^n)$ holds true for $N=2$. Then the Jacobian of $\bg$ was computed and its eigenvalues were analyzed by using \cite[Theorem 2.9]{IKM2122}, which is the two-dimensional version of Theorem \ref{Thm_MPRK_stabil}. However, the representation given in \cite[Lemma 3.5]{IKM2122} does not hold true in the general case of $N>2$. 
Hence, we follow a different approach in order to compute $\bD\bg(\by^*)$ for some $\by^*\in \ker(\bA)\cap \R^N_{>0}$ and $N\geq 2$. Introducing the map 
\begin{equation}
\widetilde{\bg}:\R^{N}_{>0}\times \R^N_{>0}\to \R^N \text{ with } \widetilde{\bg}(\bx,\by)=\bM(\bx)\by-\bx\label{eq:g_tilde}
\end{equation}
we first observe that $\widetilde{\bg}(\by^n,\by^{n+1})=\bzero$, and we are interested to compute the Jacobian of the function
\begin{equation}
\by=\bg(\bx)=(\bM(\bx))^{-1}\bx=(\bI-\Delta t\bA\diag\bigl(\bm \tau(\bx)))^{-1}\bx.\label{eq:g(x)}
\end{equation}
Note that, as $\by^*$ is a fixed point of the MPRK22($\alpha$) schemes we have $\widetilde{\bg}(\by^*,\by^*)=\bzero$ or, equivalently, $\bg(\by^*)=\by^*$. Furthermore, we know that $\widetilde{\bg}\in~\mathcal{C}^\infty(\R^{N}_{>0}\times \R^N_{>0},\R^N_{>0})$ and $\bg\in \mathcal{C}^\infty(\R^N_{>0},\R^N_{>0})$ as $\bm \tau(\by)\in \mathcal{C}^\infty(\R^N_{>0},\R^N_{>0})$ can be seen along the same lines as in the proof of \cite[Lemma 3.4]{IKM2122}. Choosing a neighborhood $\mathcal D$ of $\by^*$ such that $\overline{\mathcal D}\tm \R^N_{>0}$, we immediately see that the first derivatives of $\bg$ are Lipschitz continuous on $\mathcal{D}$ as they are on the compact set $\overline{\mathcal D}$.

Now let  $\left(\bD_\bx\widetilde{\bg}(\by^*,\by^*)\right)_{ij} =\partial_{x_j} \widetilde{\bg}_i(\by^*,\by^*)$ for $i,j=1,\dotsc,N$ and define analogously the Jacobian  $\bD_\by\widetilde{\bg}(\by^*,\by^*)$ with respect to $\by$. Since 
\begin{equation}
\bD_\by\widetilde{\bg}(\by^*,\by^*)=\bM(\by^*)\label{eq:D_yg(y*,y*)}\overset{\eqref{eq:M(y*)}}{=}I-\Delta t \bA
\end{equation}
is invertible we can make use of the implicit function theorem, which states that 
\begin{equation}
\bD\bg(\by^*)=-(\bD_\by\widetilde{\bg}(\by^*,\by^*))^{-1}(\bD_\bx\widetilde{\bg}(\by^*,\by^*)).\label{eq:Dg(y*)a}
\end{equation}
With \eqref{eq:M} we find \[\bM(\bx)\by=\by-\Delta t\bA \diag(\bm \tau(\bx))\by=\by-\Delta t\bA \diag(\by)\bm \tau(\bx).\]
Hence, plugging this into \eqref{eq:g_tilde} yields
\begin{equation}
\begin{aligned}\label{eq:D_xg(y*,y*)}
\bD_\bx\widetilde{\bg}(\by^*,\by^*)&=\bD_\bx(\bM(\bx)\by)\big|_{(\bx,\by)=(\by^*,\by^*)}-\bI\\
&=\bD_\bx(\by-\Delta t\bA \diag(\by)\bm \tau(\bx))\big|_{(\bx,\by)=(\by^*,\by^*)}-\bI\\&=-\Delta t\bA\diag(\by^*)\bD\bm\tau(\by^*)-\bI.
\end{aligned}
\end{equation}
Following the lines of the proof of \cite[Lemma 3.5]{IKM2122} we similarly get \[\bD\bm\tau(\by^*)=\frac{1}{2\alpha}(\diag(\by^*))^{-1}(\bI-\bB)\] also for $N\geq 2$. Hence, using \eqref{eq:D_xg(y*,y*)} we see
\[	\bD_\bx\widetilde{\bg}(\by^*,\by^*)=-\frac{1}{2\alpha}\Delta t\bA(\bI-\bB)-\bI.\] Using the equation above in combination with \eqref{eq:D_yg(y*,y*)} one can rewrite \eqref{eq:Dg(y*)a} in the form
\begin{equation*}
\bD\bg(\by^*)=(\bI-\Delta t\bA)^{-1}\left(\frac{1}{2\alpha}\Delta t\bA(\bI-\bB)+\bI\right).
\end{equation*}
To summarize these results, we formulate the following proposition.
\begin{prop}\label{Prop:MPRK_Dg(y*)}
Let $\bg\from\R^N_{>0}\to\R^N_{>0}$ be given by the application of MPRK22($\alpha$) to the differential equation \eqref{eq:Dahlquist_System} with $\bm 1\in \ker(\bA^T)$.
Then any $\by^*\in \ker(\bA)\cap \R^N_{>0}$ is a fixed point of $\bg$ and $\bg\in \mathcal{C}^\infty(\R^N_{>0},\R^N_{>0})$, whereby the first derivatives of $\bg$ are Lipschitz continuous in an appropriate neighborhood of $\by^*$. Moreover, all linear invariants are conserved and the Jacobian of $\bg$ satisfies
\begin{equation}
\bD\bg(\by^*)=(\bI-\Delta t\bA)^{-1}\left(\frac{1}{2\alpha}\Delta t\bA\left(\bI-(\bI-\alpha\Delta t\bA)^{-1}\right)+\bI\right).\label{eq:Dg(y*)}
\end{equation}
\end{prop}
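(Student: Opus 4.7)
The plan is to organize the chain of identities already displayed in the paragraph preceding the proposition into a formal proof. First, I verify that every $\by^{*}\in\ker(\bA)\cap\R^{N}_{>0}$ is a fixed point of $\bg$: from $\bA\by^{*}=\bzero$ together with \eqref{eq:B}--\eqref{eq:C} one reads off $\bB\by^{*}=\by^{*}$ and $\bC\by^{*}=\by^{*}$; substituting into \eqref{eq:sigma}--\eqref{eq:tau} gives $\sigma_{i}(\by^{*})=y_{i}^{*}$ and $\diag(\btau(\by^{*}))=\bI$, hence $\bM(\by^{*})=\bI-\Delta t\bA$ via \eqref{eq:M}, so $\bM(\by^{*})\by^{*}=\by^{*}$ and the defining relation \eqref{eq:Myn+1=yn} yields $\bg(\by^{*})=\by^{*}$. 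Conservation of linear invariants is immediate from the same relation: for any $\bn\in\ker(\bA^{T})$, $\bn^{T}\bM(\bx)=\bn^{T}-\Delta t(\bA^{T}\bn)^{T}\diag(\btau(\bx))=\bn^{T}$ for every admissible $\bx$, so $\bn^{T}\by^{n}=\bn^{T}\bM(\by^{n})\by^{n+1}=\bn^{T}\by^{n+1}$.

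For smoothness I would argue that $\btau\in\mathcal{C}^{\infty}(\R^{N}_{>0})$ because $\bB$ and $\bC$ are constant matrices and each $\sigma_{i}$ is a product of positive real powers of the strictly positive quantities $(\bB\by)_{i}$ and $y_{i}$; hence $\bM\in\mathcal{C}^{\infty}$. Since $\bM(\by^{*})=\bI-\Delta t\bA$ is invertible (using $\sigma(\bA)\subset\C^{-}$, so $1/\Delta t\notin\sigma(\bA)$), the map $\bx\mapsto\bM(\bx)^{-1}$ is $\mathcal{C}^{\infty}$ on a neighborhood of $\by^{*}$, and so therefore is $\bg(\bx)=\bM(\bx)^{-1}\bx$. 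Choosing a neighborhood $\mathcal{D}$ of $\by^{*}$ with $\overline{\mathcal{D}}\subset\R^{N}_{>0}$ inside this set, the first derivatives of $\bg$ are continuous on the compact set $\overline{\mathcal{D}}$, hence Lipschitz there. To obtain $\bD\bg(\by^{*})$ I would apply the implicit function theorem to $\widetilde{\bg}(\bx,\by)=\bM(\bx)\by-\bx$ at $(\by^{*},\by^{*})$, using the invertibility of $\bD_{\by}\widetilde{\bg}(\by^{*},\by^{*})=\bM(\by^{*})=\bI-\Delta t\bA$ to justify \eqref{eq:Dg(y*)a}. Rewriting $\bM(\bx)\by=\by-\Delta t\bA\diag(\by)\btau(\bx)$ then reduces $\bD_{\bx}\widetilde{\bg}(\by^{*},\by^{*})$ to $-\Delta t\bA\diag(\by^{*})\bD\btau(\by^{*})-\bI$, as in \eqref{eq:D_xg(y*,y*)}.

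The main obstacle is the explicit evaluation of $\bD\btau(\by^{*})$. The plan is to differentiate $\tau_{i}(\by)=(\bC\by)_{i}/\sigma_{i}(\by)$ with the quotient and chain rules and evaluate at $\by^{*}$, using the identities $(\bB\by^{*})_{i}=(\bC\by^{*})_{i}=\sigma_{i}(\by^{*})=y_{i}^{*}$ from \eqref{eq:diag(tau(y*))=I}. A short calculation gives $\partial_{y_{j}}\sigma_{i}(\by^{*})=\frac{1}{\alpha}B_{ij}+(1-\frac{1}{\alpha})\delta_{ij}$, so that $\partial_{y_{j}}\tau_{i}(\by^{*})=\frac{1}{y_{i}^{*}}(C_{ij}-\partial_{y_{j}}\sigma_{i}(\by^{*}))$; inserting the decomposition $\bC=(1-\frac{1}{2\alpha})\bI+\frac{1}{2\alpha}\bB$ collapses this to the compact form $\bD\btau(\by^{*})=\frac{1}{2\alpha}(\diag(\by^{*}))^{-1}(\bI-\bB)$, matching \cite[Lemma 3.5]{IKM2122}. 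The $\diag(\by^{*})$ in $\bD_{\bx}\widetilde{\bg}(\by^{*},\by^{*})$ is then absorbed by its inverse, producing $\bD_{\bx}\widetilde{\bg}(\by^{*},\by^{*})=-\frac{1}{2\alpha}\Delta t\bA(\bI-\bB)-\bI$, and combining this with $(\bD_{\by}\widetilde{\bg}(\by^{*},\by^{*}))^{-1}=(\bI-\Delta t\bA)^{-1}$ in \eqref{eq:Dg(y*)a} yields formula \eqref{eq:Dg(y*)} after substituting $\bB=(\bI-\alpha\Delta t\bA)^{-1}$.
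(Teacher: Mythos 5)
Your proposal is correct and follows essentially the same route as the paper: fixed points and conservation are read off from $\bM(\by^*)=\bI-\Delta t\bA$ and $\bn^T\bM(\bx)=\bn^T$, smoothness plus compactness gives the local Lipschitz property, and the Jacobian comes from the implicit function theorem applied to $\widetilde{\bg}(\bx,\by)=\bM(\bx)\by-\bx$. The only difference is that you carry out the computation of $\bD\btau(\by^*)=\tfrac{1}{2\alpha}(\diag(\by^*))^{-1}(\bI-\bB)$ explicitly (correctly), whereas the paper defers it to \cite[Lemma 3.5]{IKM2122}.
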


By means of a straightforward but excessive calculation one can prove that \eqref{eq:Dg(y*)} coincides with the corresponding expression given in \cite{IKM2122} for the specific case of a linear PDS with two equations.

We want to recall at this point that any eigenvector of $\bA$ with eigenvalue $\lambda$ is an eigenvector of the matrix $\bB=(\bI-\Delta t\alpha \bA)^{-1}$ with eigenvalue $(1-\Delta t\lambda\alpha)^{-1}$, and due to \eqref{eq:Dg(y*)}, also an eigenvector of $\bD\bg(\by^*)$ with corresponding eigenvalue $R(\Delta t\lambda)$, where 
\begin{equation}
R(z)=\frac{\frac{1}{2\alpha}z(1-\frac{1}{1-\alpha z})+1}{1-z}=\frac{\left(\frac{z}{\alpha}(1-\alpha z-1)+2(1-\alpha z)\right)}{2(1-\alpha z)(1-z)}=\frac{-z^2-2\alpha z+2}{2(1-\alpha z)(1-z)}.\label{eq:R(z)}
\end{equation}
We call $R$ the \emph{stability function} of the MPRK22($\alpha$) scheme because if the same analysis is carried out for a Runge--Kutta scheme the function $R$ is the stability function the Runge--Kutta method. Note, that $R$ from \eqref{eq:R(z)} coincides with the stability function $R_1$ of the MPRK22$(\alpha$) scheme as derived in \cite[Lemma 3.6]{IKM2122}. Hence, we can cite \cite[Lemma 3.8]{IKM2122}, i.\,e.\ $\lvert R(z)\rvert<1$ for all $z=\Delta t\lambda\in \R^-$. However, as $\bA\in \R^{N\times N}$ may possess also complex eigenvalues we have to prove that $\lvert R(z)\rvert<1$ holds even for all $z\in \C^-$.
\begin{prop}\label{Prop:R(z)<1}
The stability function $R(z)=\frac{-z^2-2\alpha z+2}{2(1-\alpha z)(1-z)}$ from \eqref{eq:R(z)} with $\alpha> \frac{1}{2}$ satisfies $R(0)=1$ and $\lvert R(z)\rvert<1$ for all $z\in \C^-\setminus\{0\}$. For $\alpha=\frac12$ we have $\lvert R(z)\rvert<1$ for all $z$ with $\operatorname{Re}(z)<0$ and $\lvert R(z)\rvert =1$, if $\operatorname{Re}(z)=0$. 
\end{prop}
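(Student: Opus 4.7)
The strategy is to apply the maximum modulus principle to $R$ on the closed left half-plane $\overline{\C^-}$. First I would record the easy facts: $R(0)=1$ is immediate; the poles of $R$ are the zeros of $(1-\alpha z)(1-z)$, namely $z=1$ and $z=1/\alpha$, both lying in the open right half-plane because $\alpha>0$, so $R$ is holomorphic on $\overline{\C^-}$; and since numerator and denominator of $R$ have equal degree, $\lim_{z\to\infty}R(z)=-\tfrac{1}{2\alpha}$, so $R$ extends continuously to the one-point compactification $\overline{\C^-}\cup\{\infty\}$, a compact subset of the Riemann sphere.

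The key computation is the evaluation of $|R(iy)|$ on the imaginary axis. Writing $p(z)=-z^2-2\alpha z+2$ and $q(z)=2(1-\alpha z)(1-z)$, a short expansion yields
\[
|p(iy)|^2 = y^4 + 4(1+\alpha^2)y^2 + 4 \qquad \text{and} \qquad |q(iy)|^2 = 4\alpha^2 y^4 + 4(1+\alpha^2)y^2 + 4,
\]
so $|q(iy)|^2 - |p(iy)|^2 = (4\alpha^2-1)y^4$. This single identity settles the boundary behavior: for $\alpha>\tfrac12$ we have $|R(iy)|<1$ whenever $y\ne 0$ while $|R(0)|=1$, and in addition $|R(\infty)|=\tfrac{1}{2\alpha}<1$; for $\alpha=\tfrac12$ we obtain $|R(iy)|=1$ for every $y\in\R$ and $|R(\infty)|=1$.

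To propagate these boundary estimates to the interior, I would invoke the maximum modulus principle on $\overline{\C^-}\cup\{\infty\}$. Since $R$ is rational and non-constant (its poles sit in the right half-plane), the strong form of the principle rules out any interior maximum of $|R|$. Hence the supremum of $|R|$ over $\overline{\C^-}$ equals its supremum on the boundary, which by the identity above equals $1$, and moreover $|R(z)|<1$ strictly for every $z$ with $\operatorname{Re}(z)<0$. Combining this with the explicit boundary values gives the two cases claimed in the proposition.

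The main obstacle is a purely technical one: the maximum modulus principle is classically stated for bounded domains, whereas $\C^-$ is unbounded. I would handle this either by viewing $R$ as a meromorphic function on the Riemann sphere, so that $\overline{\C^-}\cup\{\infty\}$ is a compact \emph{closed hemisphere} on which $R$ is holomorphic, or, more elementarily, by applying the principle to the truncations $\overline{\C^-}\cap\overline{B_r(0)}$ and letting $r\to\infty$, exploiting that $R$ is bounded on $\overline{\C^-}$ thanks to its finite limit at infinity.
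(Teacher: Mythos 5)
Your proposal is correct and follows essentially the same route as the paper: the identical boundary computation $\lvert q(\ii y)\rvert^2-\lvert p(\ii y)\rvert^2=(4\alpha^2-1)y^4$ on the imaginary axis, followed by the maximum modulus principle to push $\lvert R\rvert\le 1$ into the interior and then to strictness for $\operatorname{Re}(z)<0$. The only difference is bookkeeping at infinity: the paper invokes the Phragm\'en--Lindel\"of principle for the unbounded half-plane, whereas you compactify (or truncate and let $r\to\infty$) and explicitly check $\lvert R(\infty)\rvert=\tfrac{1}{2\alpha}$ — a slightly more self-contained treatment of the same step.
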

\begin{proof}
We first investigate $\lvert R(z)\rvert$ for $z=\ii y$ and $y\in \R$. 
A small calculation reveals that the numerator of $\lvert R(z)\rvert$ can be written as
\begin{equation}\label{eq:numerator_R(z)}
\lvert -z^2-2\alpha z+2\rvert^2=\lvert y^2+2+(-2\alpha y)\ii\rvert^2=(y^2+2)^2+4\alpha^2 y^2=y^4+4y^2(1+\alpha^2)+4.
\end{equation}
Performing a similar calculation for the denominator of $\lvert R(z)\rvert$ we find
\begin{equation*}
\begin{aligned}
\lvert 2(1-\alpha z)(1-z)\rvert^2&=\lvert 2\alpha z^2-2z(1+\alpha) +2\rvert^2=\lvert -2\alpha y^2+2 +(- 2y(1+\alpha))\ii\rvert^2\\
&=(-2\alpha y^2+2)^2+4y^2(1+\alpha)^2						=4\alpha^2y^4+4y^2(1+\alpha^2)+4.
\end{aligned}
\end{equation*}
Using \eqref{eq:numerator_R(z)} and $\alpha= \frac12$ we see that $\lvert R(z)\rvert =  1$ on the imaginary axis, and if $\alpha>\frac12$ we find $\lvert R(z)\rvert< 1$ for all $y\neq 0$.

Next we note that $R$ is a holomorphic function which is defined for all $z\in \C^-$. Since $R$ is rational we can apply the Phragmén-Lindelöf principle on the union of the origin and the interior of $\C^-$ and conclude that $\lvert R(z)\rvert\leq 1$ for all $z\in \C^-$. Furthermore, since $R$ is not constant, we conclude from the maximum modulus principle that there exist no $z_0$ in the interior of $\C^-$ with $\abs{R(z_0)}=1$, or equivalently, $\abs{R(z_0)}<1$ holds for all $z_0$ with $\operatorname{Re}(z_0)<0$.
\end{proof}

As a direct consequence of the application of Theorem \ref{Thm_MPRK_stabil} in combination with the Propositions \ref{Prop:MPRK_Dg(y*)} and \ref{Prop:R(z)<1} we obtain the following two corollaries.

\begin{cor}\label{Cor:MPRKstab}
Let $\by^*$ be a positive steady state of the differential equation \eqref{eq:Dahlquist_System}. Then $\by^*$ is a stable fixed point of the MPRK22($\alpha$) scheme for all $\Delta t>0$, if $\alpha > \frac12$ or $\alpha=\frac{1}{2}$ and all nonzero eigenvalues of $\bD\bg(\by^*)$ given in \eqref{eq:Dg(y*)} have a negative real part.
\end{cor}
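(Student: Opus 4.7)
The plan is to verify the hypotheses of Theorem~\ref{Thm_MPRK_stabil}~\ref{it:Thma} for the MPRK22($\alpha$) map $\bg$. Proposition~\ref{Prop:MPRK_Dg(y*)} already supplies everything except the spectral condition: $\bg$ is $\mathcal{C}^\infty$ with locally Lipschitz first derivatives on a neighborhood of $\by^*$, every $\by\in\ker(\bA)\cap\R^N_{>0}$ is a fixed point of $\bg$, and the Jacobian is given explicitly by \eqref{eq:Dg(y*)}. So the only thing left to do is to show that the $N-k$ eigenvalues of $\bD\bg(\by^*)$ not equal to the $k$-fold eigenvalue $1$ lie strictly inside the unit disk.

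The key observation is that \eqref{eq:Dg(y*)} expresses $\bD\bg(\by^*)$ as a rational function of $\bA$ that is well-defined on $\sigma(\bA)$: both $(\bI-\Delta t\bA)^{-1}$ and $(\bI-\alpha\Delta t\bA)^{-1}$ exist, since $\sigma(\bA)\subseteq\C^-$ while $1/\Delta t$ and $1/(\alpha\Delta t)$ are positive. Evaluating this rational function on a Jordan basis of $\bA$ and invoking the spectral mapping theorem yields
\[
\sigma(\bD\bg(\by^*))=\{R(\Delta t\lambda)\mid \lambda\in\sigma(\bA)\}
\]
with algebraic multiplicities preserved, where $R$ is the stability function \eqref{eq:R(z)}. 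In particular, the $k$-fold eigenvalue $\lambda=0$ of $\bA$ is mapped to $R(0)=1$ with multiplicity $k$, while the remaining $N-k$ eigenvalues of $\bD\bg(\by^*)$ are of the form $R(\Delta t\lambda)$ with $\lambda\in\sigma(\bA)\setminus\{0\}\subseteq\C^-\setminus\{0\}$.

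At this point Proposition~\ref{Prop:R(z)<1} handles both cases of the corollary directly. For $\alpha>\tfrac12$ one has $\lvert R(z)\rvert<1$ on all of $\C^-\setminus\{0\}$, hence all $N-k$ non-trivial eigenvalues of $\bD\bg(\by^*)$ lie strictly inside the unit disk unconditionally in $\Delta t>0$. For $\alpha=\tfrac12$ the same proposition gives $\lvert R(z)\rvert<1$ for $\operatorname{Re}(z)<0$ and $\lvert R(z)\rvert=1$ on the imaginary axis; the extra hypothesis of the corollary rules out the latter possibility, so once more $\lvert R(\Delta t\lambda)\rvert<1$ for every nonzero $\lambda\in\sigma(\bA)$. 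In either regime Theorem~\ref{Thm_MPRK_stabil}~\ref{it:Thma} then yields the asserted stability of $\by^*$. I expect the only nontrivial point to be the spectral mapping step, i.e.\ confirming that the rational expression \eqref{eq:Dg(y*)} applied to the Jordan form of $\bA$ preserves algebraic multiplicities (in particular for the $k$-fold eigenvalue $0$, which is semisimple by the hypothesis of Theorem~\ref{Thm:StabLin}); beyond that, the argument is essentially a direct appeal to Proposition~\ref{Prop:R(z)<1}.
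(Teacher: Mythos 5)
Your proposal is correct and follows the paper's route exactly: the corollary is obtained by combining Theorem~\ref{Thm_MPRK_stabil}~\ref{it:Thma} with Propositions~\ref{Prop:MPRK_Dg(y*)} and~\ref{Prop:R(z)<1}, using that the non-unit eigenvalues of $\bD\bg(\by^*)$ are $R(\Delta t\lambda)$ for the nonzero $\lambda\in\sigma(\bA)$. Your explicit spectral-mapping step (preserving algebraic multiplicities via the Jordan form) is merely a slightly more careful version of the paper's eigenvector remark following \eqref{eq:Dg(y*)}, so the approaches are essentially identical.
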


\begin{cor}\label{Cor:MPRKstab1}
Let the unique steady state $\by^*$ of the initial value problem \eqref{eq:Dahlquist_System}, \eqref{eq:IC} be positive. Then the iterates of MPRK22($\alpha$) locally converge towards $\by^*$ for all $\Delta t>0$, if $\alpha>\frac12$ or $\alpha=\frac{1}{2}$ and all nonzero eigenvalues of $\bD\bg(\by^*)$ given in \eqref{eq:Dg(y*)} have a negative real part.
\end{cor}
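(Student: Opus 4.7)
The plan is to apply part~\ref{it:Thmb} of Theorem~\ref{Thm_MPRK_stabil} directly, feeding in the structural information from Proposition~\ref{Prop:MPRK_Dg(y*)} and the spectral information from Proposition~\ref{Prop:R(z)<1}.

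First, I would verify the structural hypotheses of Theorem~\ref{Thm_MPRK_stabil}. Taking $D=\R^N_{>0}$ and a bounded neighborhood $\mathcal D$ of $\by^*$ with $\overline{\mathcal D}\subset\R^N_{>0}$, Proposition~\ref{Prop:MPRK_Dg(y*)} supplies all of them: $\by^*\in\ker(\bA)\cap\mathcal D$ is a fixed point of $\bg$, every element of $\ker(\bA)\cap\mathcal D$ is a fixed point, $\bg\in\mathcal C^\infty$ has Lipschitz continuous first derivatives on $\mathcal D$, and $\bg$ conserves all linear invariants, so in particular $\bg(H\cap D)\subset H\cap D$.

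Next, I would check the spectral condition appearing in part~\ref{it:Thma} of Theorem~\ref{Thm_MPRK_stabil}. Formula \eqref{eq:Dg(y*)} exhibits $\bD\bg(\by^*)=p(\bA)$ for the rational function $p(z)=R(\Delta t z)$, which is well defined on $\sigma(\bA)$ because $\sigma(\bA)\subset\C^-$ while $\alpha,\Delta t>0$. The spectral mapping theorem then yields
\[
\sigma(\bD\bg(\by^*))=\{R(\Delta t\mu) : \mu\in\sigma(\bA)\}
\]
with matching algebraic multiplicities. The zero eigenvalue of $\bA$ has multiplicity $k$ and, by the standing hypothesis on $\bA$, only Jordan blocks of size one, so it contributes the eigenvalue $R(0)=1$ with multiplicity exactly $k$, matching the eigenvalues forced to be one by Theorem~\ref{Thm_MPRK_stabil}. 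Each remaining eigenvalue $\mu$ of $\bA$ satisfies $\operatorname{Re}(\mu)\le 0$, and Proposition~\ref{Prop:R(z)<1} delivers $|R(\Delta t\mu)|<1$: for every such $\mu$ when $\alpha>\tfrac12$, and under the extra hypothesis of the corollary when $\alpha=\tfrac12$ (the only problematic case to exclude being nonzero purely imaginary eigenvalues of $\bA$, on which $|R|=1$).

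With the hypotheses of part~\ref{it:Thmb} of Theorem~\ref{Thm_MPRK_stabil} now in place, the theorem supplies a $\delta>0$ such that every $\by^0\in H\cap D$ with $\norm{\by^0-\by^*}<\delta$ produces iterates $\by^n\to\by^*$; since the scheme preserves all linear invariants, admissible numerical initial data lie in $H$, and the uniqueness of the positive steady state of the IVP pins down $\by^*$ as the correct limit in $H$, giving the claimed local convergence. The only (rather mild) subtle point is the spectral mapping with multiplicities, which is clean here precisely because the critical eigenvalue $\mu=0$ of $\bA$ is non-defective.
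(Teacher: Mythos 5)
Your proposal is correct and follows exactly the route the paper intends: the paper gives no explicit proof, presenting Corollary~\ref{Cor:MPRKstab1} as a ``direct consequence'' of Theorem~\ref{Thm_MPRK_stabil}~\ref{it:Thmb} combined with Propositions~\ref{Prop:MPRK_Dg(y*)} and~\ref{Prop:R(z)<1}, and your write-up simply fills in the same verification of hypotheses (structural conditions from Proposition~\ref{Prop:MPRK_Dg(y*)}, spectral condition via the rational spectral mapping $\lambda\mapsto R(\Delta t\lambda)$ and Proposition~\ref{Prop:R(z)<1}). The details you add, including the non-defectiveness of the eigenvalue $0$ of $\bA$ and the identification of $\by^*$ as the unique steady state in $H$, are consistent with the paper's setup.
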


\section{Numerical Experiments}\label{Sec:Num_Tests}
In this section we consider three linear positive and conservative PDS in order to verify the stability properties of the MPRK22($\alpha$) schemes as stated in Corollary \ref{Cor:MPRKstab}. Since all systems are conservative, $\lambda=0$ is an eigenvalue of each system matrix. The test problems are chosen in such a way that the nonzero eigenvalues either lie in $\R^-$ or in $\C^-\setminus\R^-$. Moreover, also the case of an eigenvalue $\lambda=0$ of multiplicity greater than 1 is investigated.

\subsection{Test problem with exclusively real eigenvalues}
We consider the initial value problem
\begin{equation}
\by'=100\Vec{-2& 1 &1\\1 &-4 &1\\1 &3 &-2}\by,\quad  \by(0)=\Vec{1\\ 9\\5}.\label{eq:initProbReal}
\end{equation}
It is easily seen that the only linear invariant is $\mathbf{1}^T\by$. Since the system matrix is a Metzler matrix the exact solution is positive for each positive initial condition.
The eigenvalues of the system matrix are $\lambda_1=0$, $\lambda_2=-300$ and $\lambda_3=-500$, so that the exact solution can be written as
\begin{equation}
\by(t)=c_1\begin{pmatrix}5\\ 3\\ 7\end{pmatrix}+c_2e^{-300t}\begin{pmatrix}-1\\ 0\\ 1\end{pmatrix}+c_3e^{-500t}\begin{pmatrix}0\\ -1\\ 1\end{pmatrix}\label{eq:exsolReal}
\end{equation}
with $c_1=1$, $c_2=4$ and $c_3=-6$.
In addition to the time-dependent history of the solution, Figure \ref{Fig:initProbReal} also shows the rapid convergence to the steady state solution 
\begin{equation*}
\by^*=\lim_{t\to\infty}\by(t)=\Vec{5\\3\\7}
\end{equation*}
as well as the conservativity of the system of differential equations.

\begin{figure}[h!]
\centering
\includegraphics[width=0.5\textwidth]{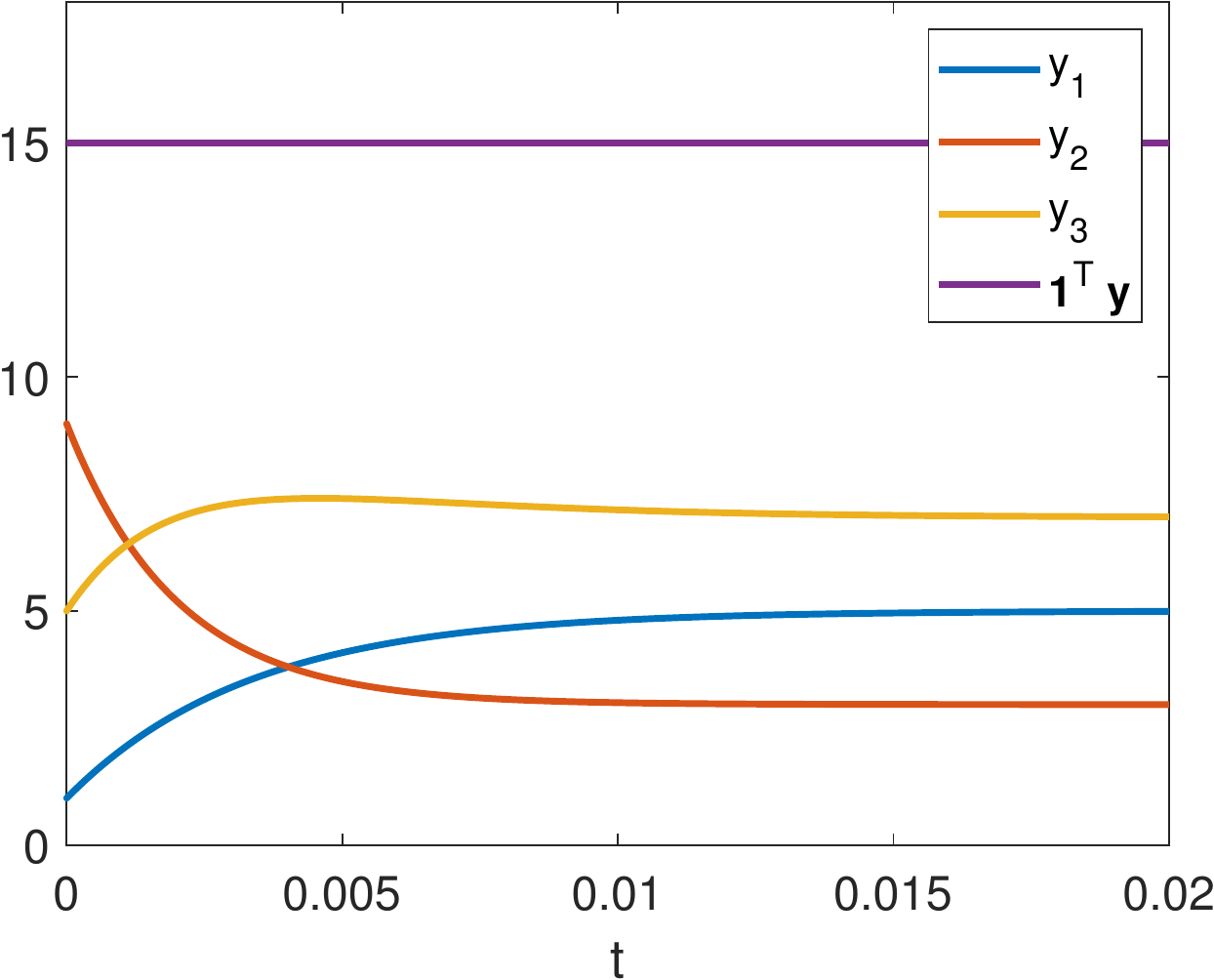}
\caption{Exact solution \eqref{eq:exsolReal} of the initial value problem \eqref{eq:initProbReal} and the linear invariant $\bm 1^T\by$.}\label{Fig:initProbReal}
\end{figure}

\subsection{Test problem with complex eigenvalues}
As a second test case, let us consider a conservative system with complex eigenvalues, namely
\begin{equation}
\by'=100\Vec{-4& 3 &1\\2 &-4 &3\\2 &1 &-4}\by,\quad  \by(0)=\Vec{9\\ 20\\8}\label{eq:initProbIm},
\end{equation}
which again includes a Metzler matrix.
Since $\bm 1^T\by$ is a linear invariant, $\lambda_1=0$ is an eigenvalue of the system matrix. The other two eigenvalues are complex and given by $\lambda_2=100(-6+\ii)$ and $\lambda_3=\overline{\lambda_2}$. Hence, the exact solution contains terms of $\sin$ and $\cos$ and can be written as
\begin{equation}
\begin{aligned}
\by(t)=&c_1\begin{pmatrix}13\\ 14\\ 10\end{pmatrix}+c_2e^{-600t}\left(\cos \left(100t\right)\begin{pmatrix}-1\\ 0\\ 1\end{pmatrix}-\sin \left(100t\right)\begin{pmatrix}1\\ -1\\ 0\end{pmatrix}\right)\\&+c_3e^{-600t}\left(\cos \left(100t\right)\begin{pmatrix}1\\ -1\\ 0\end{pmatrix}+\sin \left(100t\right)\begin{pmatrix}-1\\ 0\\ 1\end{pmatrix}\right).\label{eq:exsolIm}
\end{aligned}
\end{equation}
From the initial condition we find $c_1=1,$ $c_2=-2$ and $c_3=-6$, so that the steady state solution is given by
\begin{equation*}
\by^*=\lim_{t\to\infty}\by(t)=\Vec{13\\14\\10}.
\end{equation*} 
Altogether, Figure \ref{Fig:initProbIm} pictures the exact solution \eqref{eq:exsolIm} and its convergence towards $\by^*$ together with the linear invariant $\bm 1^T\by$.
\begin{figure}[h!]
\centering
\includegraphics[width=0.5\textwidth]{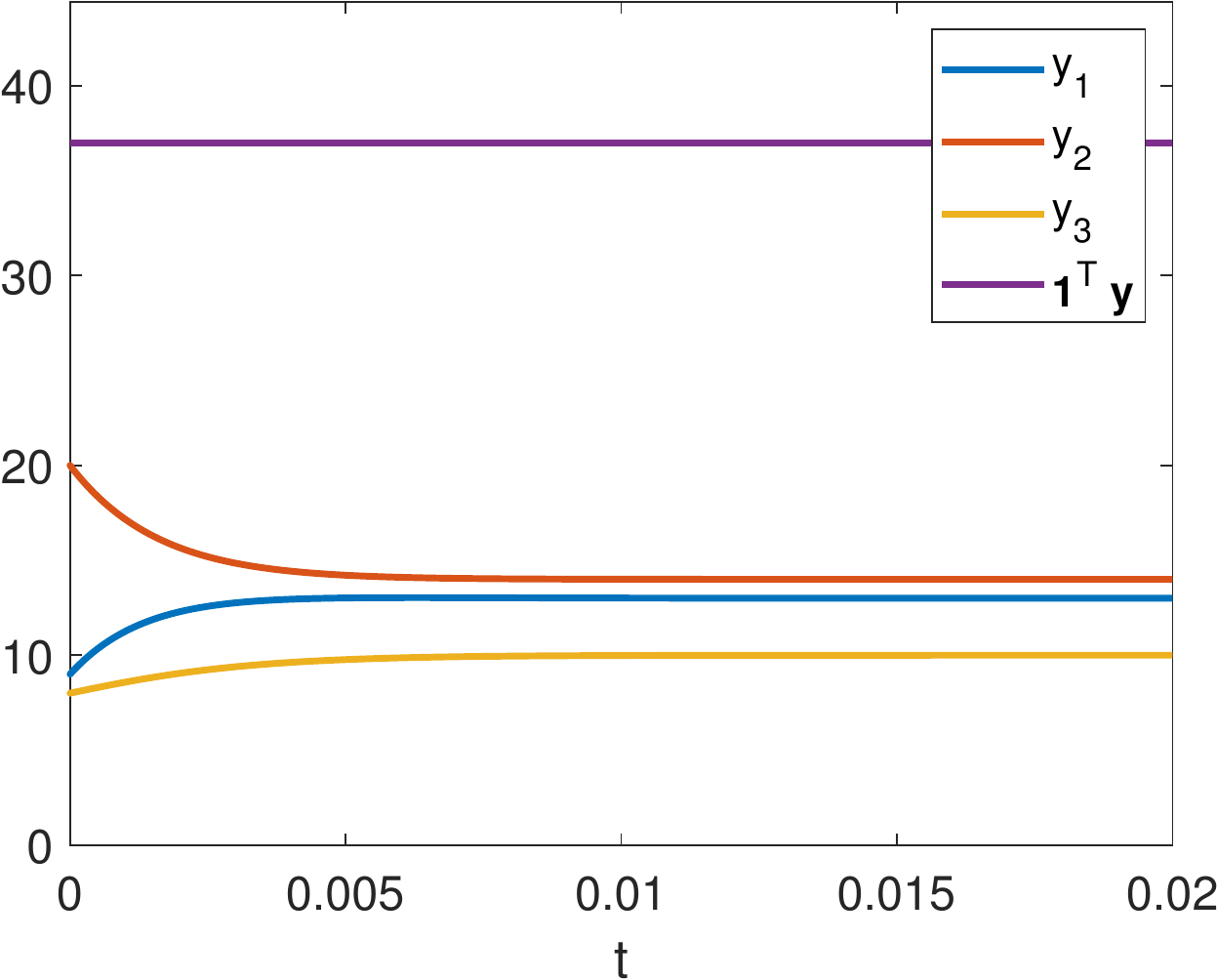}
\caption{The exact solution \eqref{eq:exsolIm} of the initial value problem \eqref{eq:initProbIm} and the linear invariant $\bm 1^T\by$.}\label{Fig:initProbIm}
\end{figure}

\subsection{Test problem with double zero eigenvalue}
The last linear test problem 
\begin{equation}
\by'=100\Vec{-2& 0 &0 &1\\0 &-4 &3& 0\\0 &4& -3 &0\\ 2 & 0&0&-1}\by,\quad  \by(0)=\Vec{4\\ 1\\9\\1}\label{eq:initProb4dim}
\end{equation}
has more than one linear invariant, and thus, the associated steady state solution is an element of a two-dimensional linear subspace. The two eigenvalues $\lambda_1,\lambda_2=0$ of the system matrix, which again is a Metzler matrix, are associated with two linear invariants $\bm 1^T\by$ and $\bn^T\by$ with $\bn=(1,2,2,1)^T$. The remaining eigenvalues are real and given by  $\lambda_3=-300$ and $\lambda_3=-700$. In total, the exact solution is
\begin{equation}
\by(t)=c_1\Vec{0\\1\\ \frac43\\0}+c_2\Vec{1\\0\\0\\2}+c_3e^{-700t}\Vec{0\\1\\-1\\0}+c_4e^{-300t}\Vec{1\\0\\ 0\\-1},\label{eq:exsol4dim}
\end{equation}
where we conclude from the initial condition that
\begin{align*}
c_1=\frac{30}{7},\quad c_2=\frac53, \quad c_3=-\frac{23}{7}\qta c_4=\frac73.
\end{align*}
Plugging the values of $c_1$ and $c_2$ into \eqref{eq:exsol4dim} we find the steady state 
\begin{equation*}
\by^*=\lim_{t\to\infty}\by(t)=\left(\frac53,\frac{30}{7},\frac{40}{7},\frac{10}{3}\right)^T.
\end{equation*}
In total, the exact solution and its asymptotic behavior as well as the two linear invariants are depicted in Figure \ref{Fig:initProb4dim}.
\begin{figure}[h!]
\centering
\includegraphics[width=0.5\textwidth]{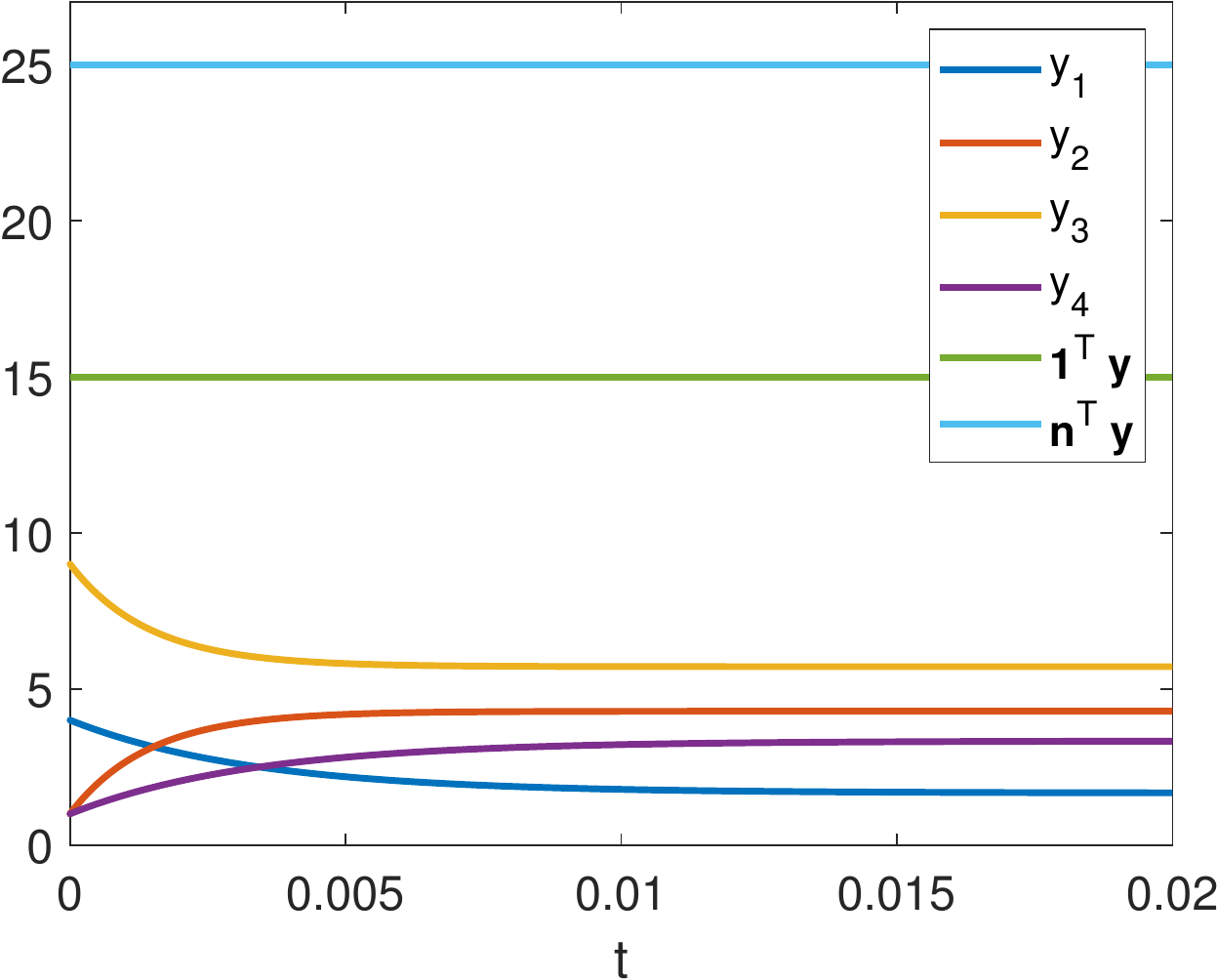}
\caption{The exact solution \eqref{eq:exsol4dim} of the initial value problem \eqref{eq:initProb4dim} and the associated two linear invariants $\bm 1^T\by$ and $\bn^T\by$.}\label{Fig:initProb4dim}
\end{figure}

It is worth to mention that the three test cases represent stiff problems due to the large absolute values of the corresponding eigenvalues. As a result, the exact solution converges very fast to the steady state and has already at time $t=0.02$ a distance $\norm{\by(t)-\by^*}$ to the steady state which is smaller than $2\cdot 10^{-2}$. In order to confirm numerically that MPRK22($\alpha$) schemes are stable as claimed in Corollary \ref{Cor:MPRKstab}, and to demonstrate the local convergence to the steady state solution as stated in Corollary \ref{Cor:MPRKstab1}, we choose a comparably large time step size of $\Delta t=5$ for all examples. 

In the Figures \ref{Fig:MPRKinitProbReal}, \ref{Fig:MPRKinitProbIm} and \ref{Fig:MPRKinitProb4dim}, we compare the MPRK22($\alpha$) schemes for $\alpha\in\left\{\frac12,1,5\right\}$ and find that for $\alpha=1$ or $\alpha=5$, plotted in the respective upper two subfigures, the iterates satisfy  $\norm{\by(t)-\by^*}<3\cdot 10^{-2}$ after $t=40$, i.\,e.\ after $8$ iterations, whereas for $\alpha=\frac12$ we cannot observe the convergence of the iterates towards $\by^*$ within $t\in[0,40]$ as one can see from the bottom left plot in each of the Figures \ref{Fig:MPRKinitProbReal}, \ref{Fig:MPRKinitProbIm} and \ref{Fig:MPRKinitProb4dim}. Nevertheless, the results depicted on the bottom right show that even for the case $\alpha=\frac12$, the stability and convergence proved in Corollaries \ref{Cor:MPRKstab} and \ref{Cor:MPRKstab1} can be confirmed numerically by extending the observation period. Thereby, concerning each model problem the iterates obtained by MPRK($\frac12$) satisfy $\norm{\by(t)-\by^*}<7\cdot 10^{-2}$ at time $t=2\cdot 10^4$.

\begin{figure}[!h]
\begin{subfigure}[t]{0.495\textwidth}
\includegraphics[width=\textwidth]{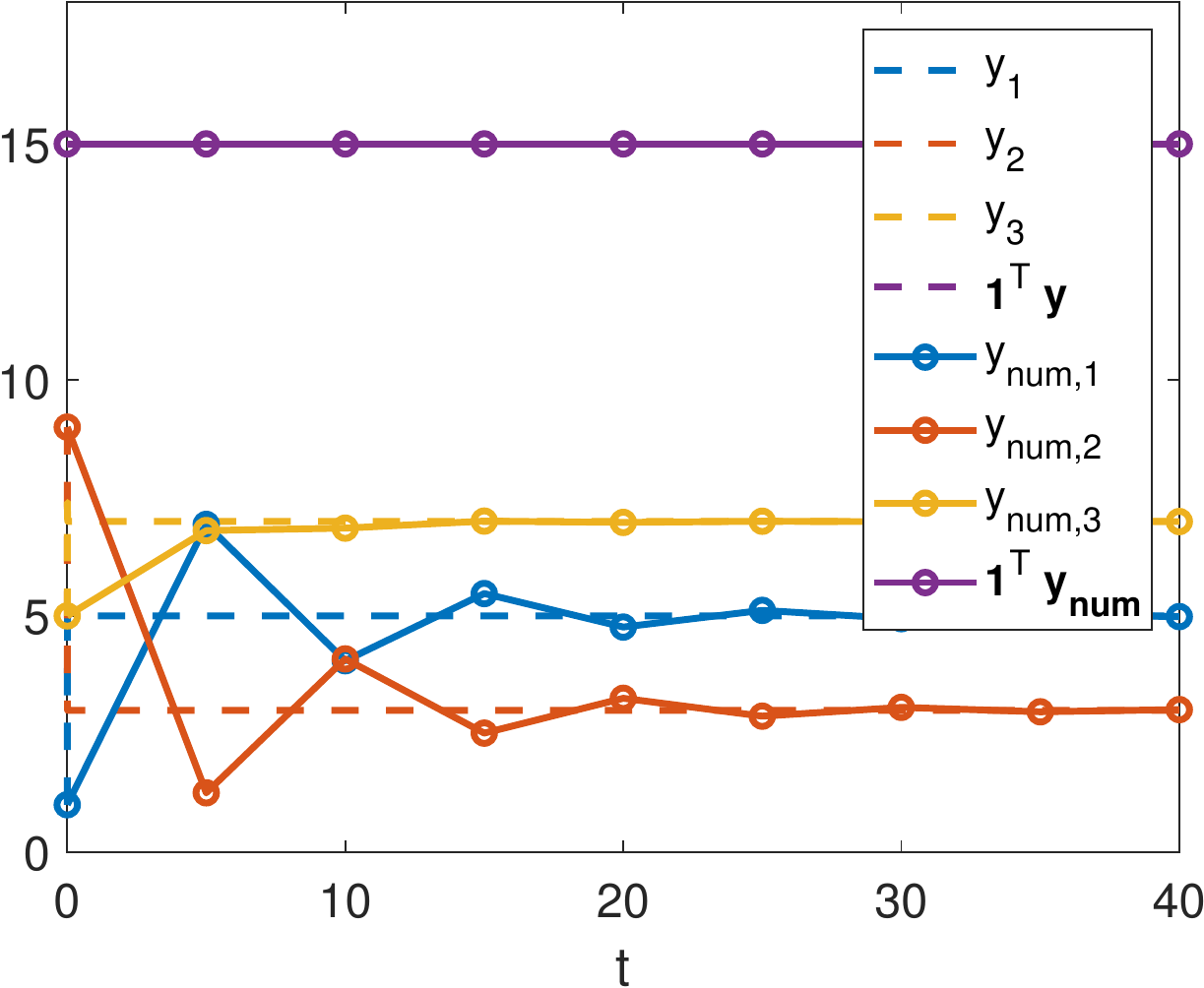}
\subcaption{$\alpha=1$}
\end{subfigure}
\begin{subfigure}[t]{0.495\textwidth}
\includegraphics[width=\textwidth]{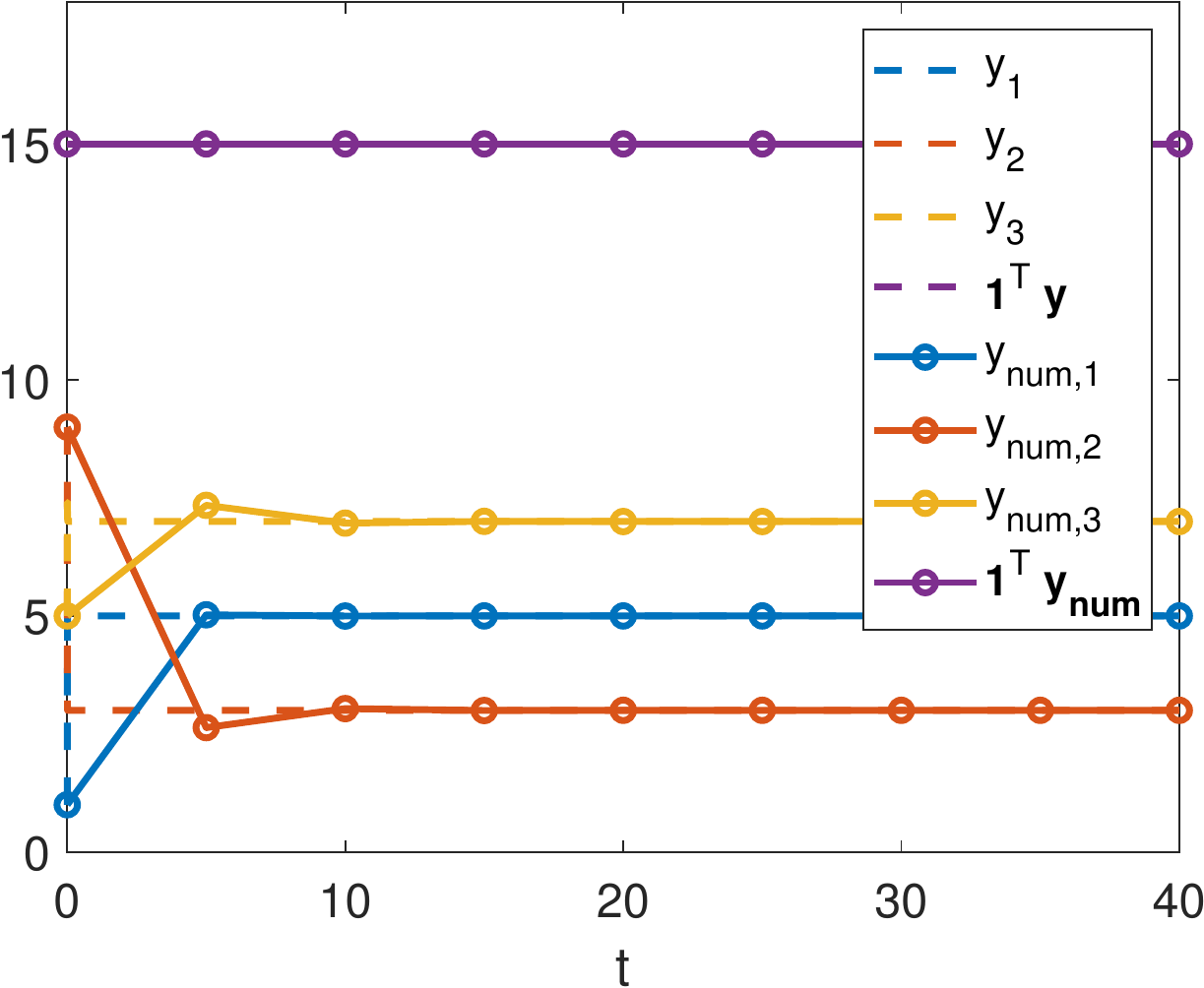}
\subcaption{$\alpha=5$}
\end{subfigure}\\
\begin{subfigure}[t]{0.495\textwidth}
\includegraphics[width=\textwidth]{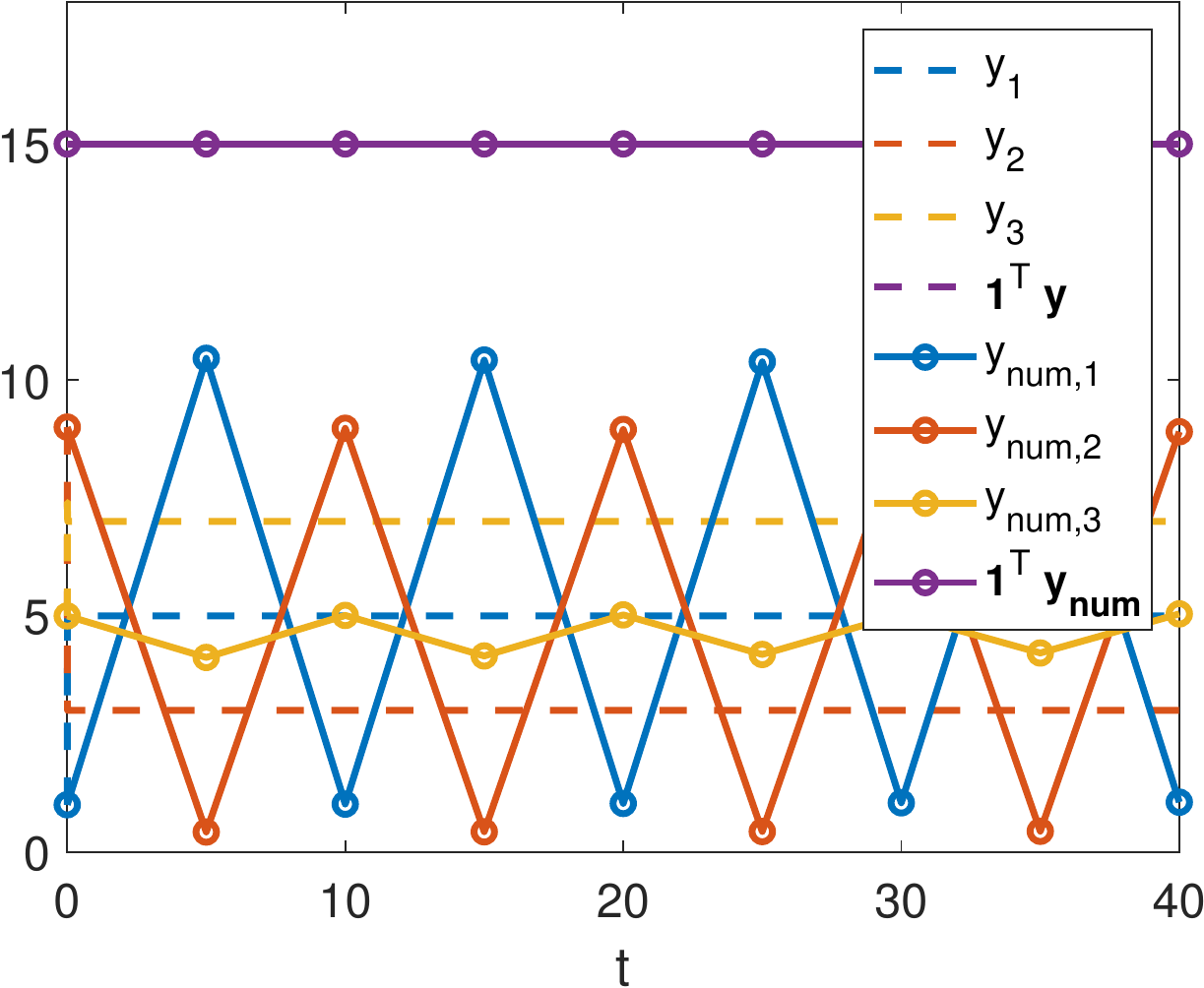}
\subcaption{$\alpha=0.5$}
\end{subfigure}
\begin{subfigure}[t]{0.495\textwidth}
\includegraphics[width=\textwidth]{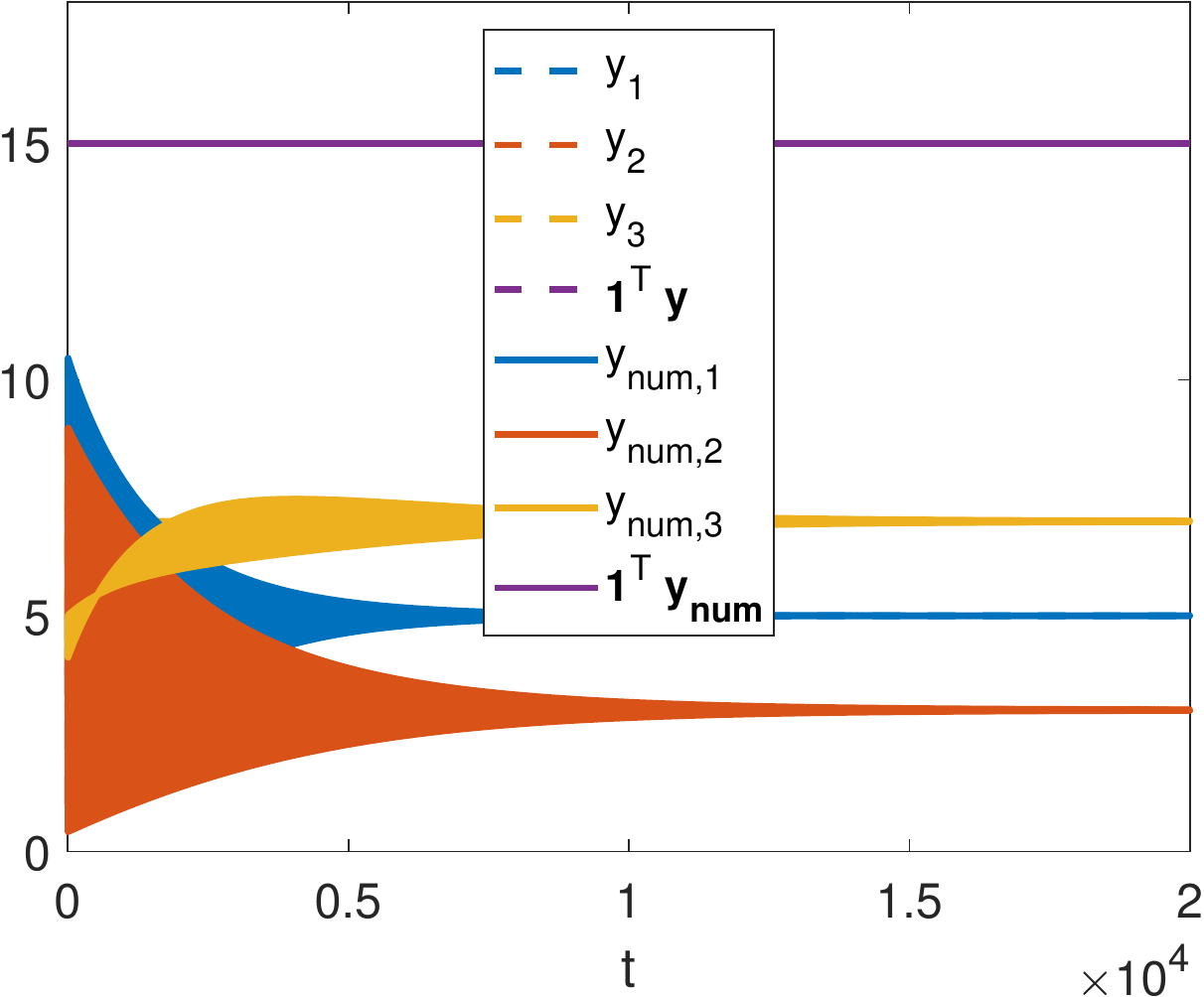}
\subcaption{$\alpha=0.5$}
\end{subfigure}
\caption{Numerical approximations of \eqref{eq:initProbReal} using MPRK22($\alpha$) schemes. The dashed lines indicate the exact solution \eqref{eq:exsolReal}.}\label{Fig:MPRKinitProbReal}
\end{figure}
\begin{figure}[!h]
\begin{subfigure}[t]{0.495\textwidth}
\includegraphics[width=\textwidth]{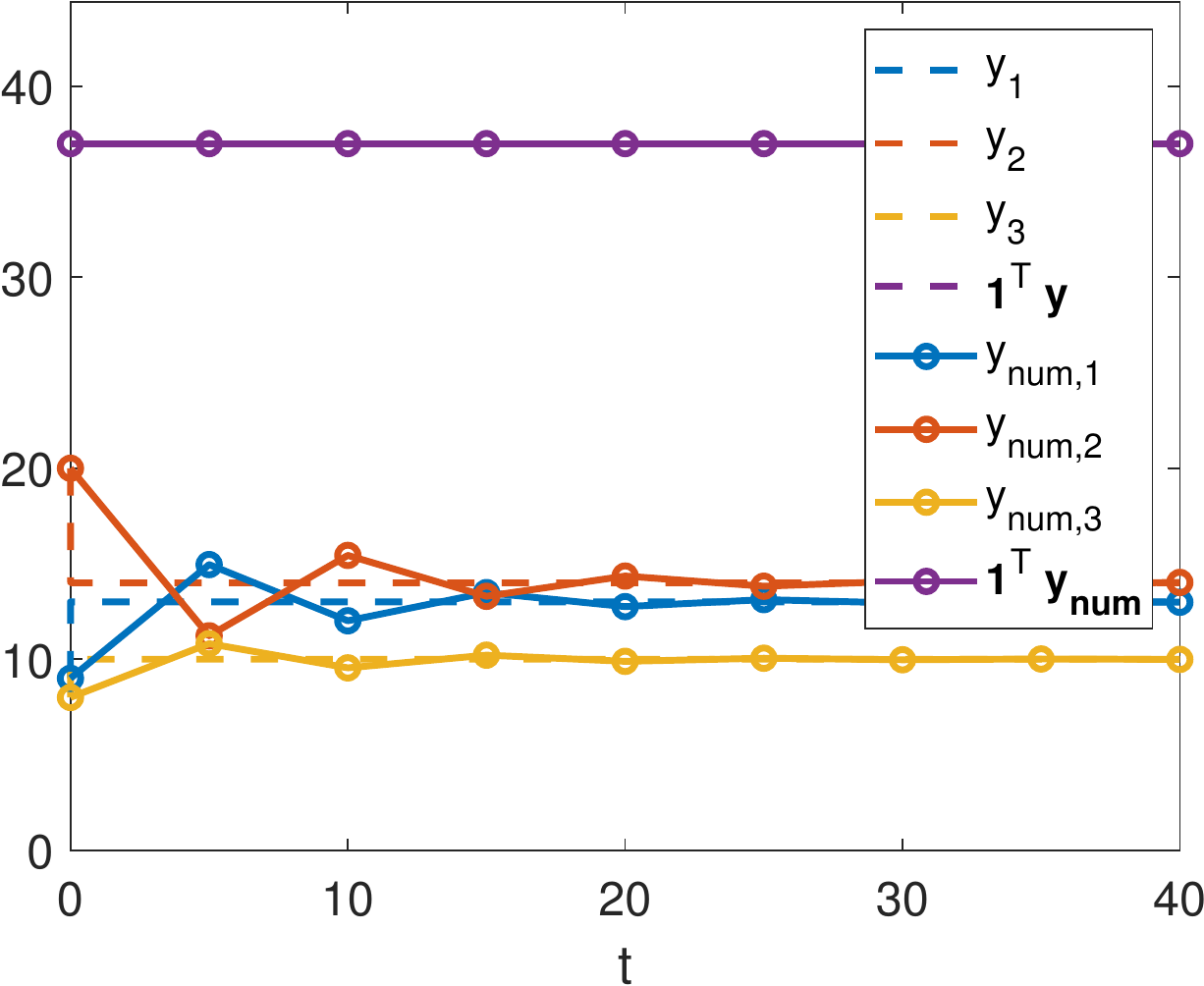}
\subcaption{$\alpha=1$}
\end{subfigure}
\begin{subfigure}[t]{0.495\textwidth}
\includegraphics[width=\textwidth]{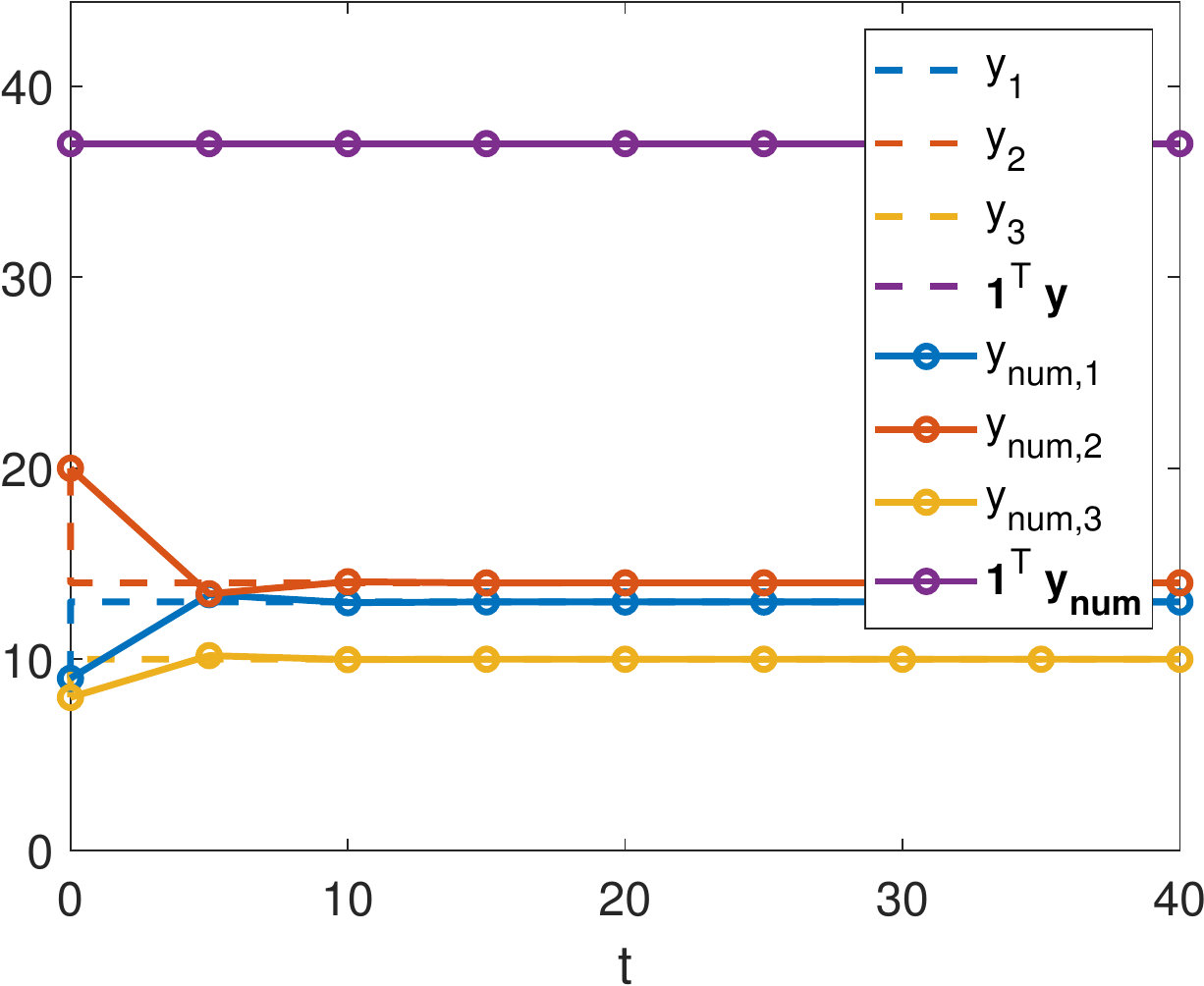}
\subcaption{$\alpha=5$}
\end{subfigure}\\
\begin{subfigure}[t]{0.495\textwidth}
\includegraphics[width=\textwidth]{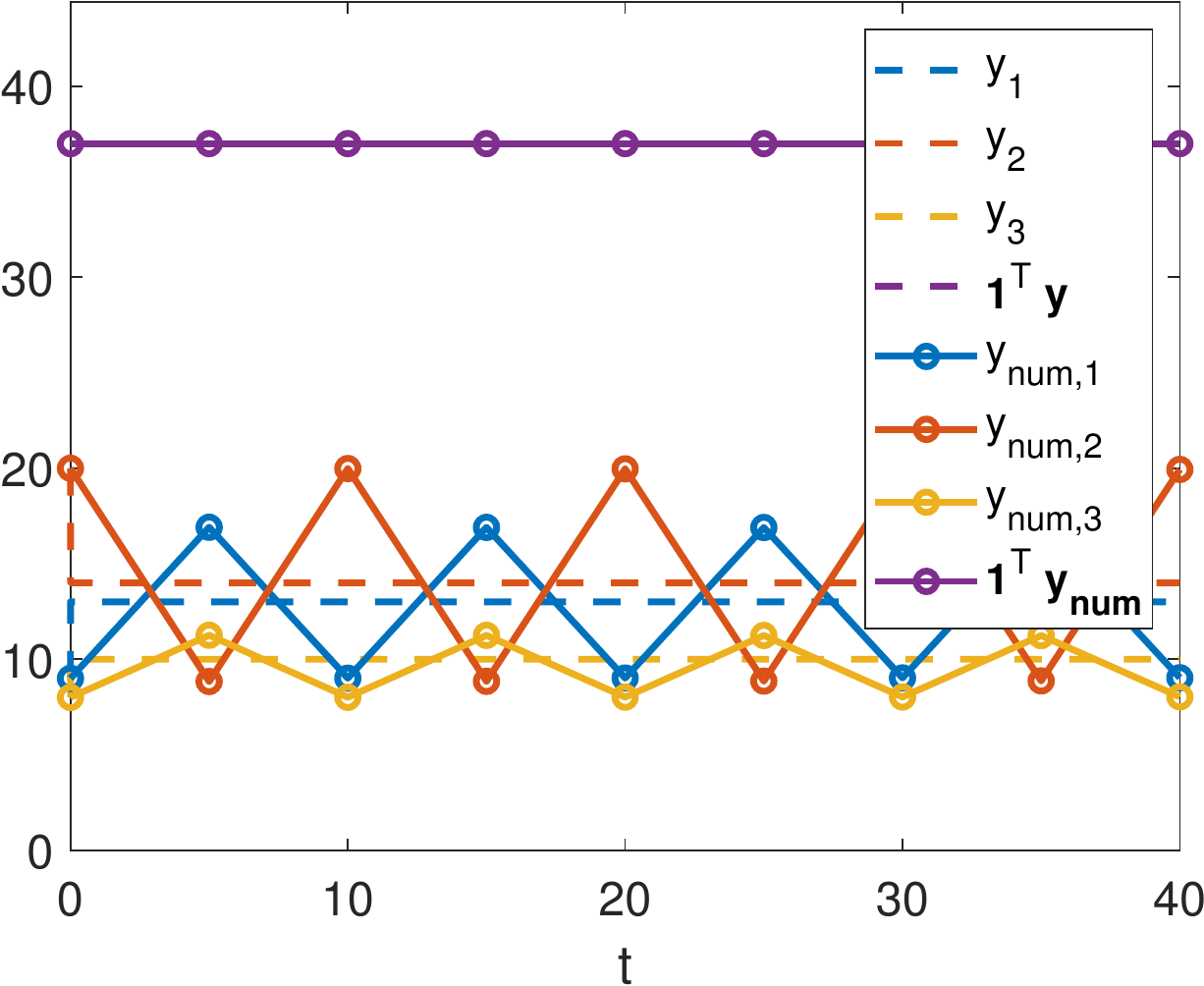}
\subcaption{$\alpha=0.5$}
\end{subfigure}
\begin{subfigure}[t]{0.495\textwidth}
\includegraphics[width=\textwidth]{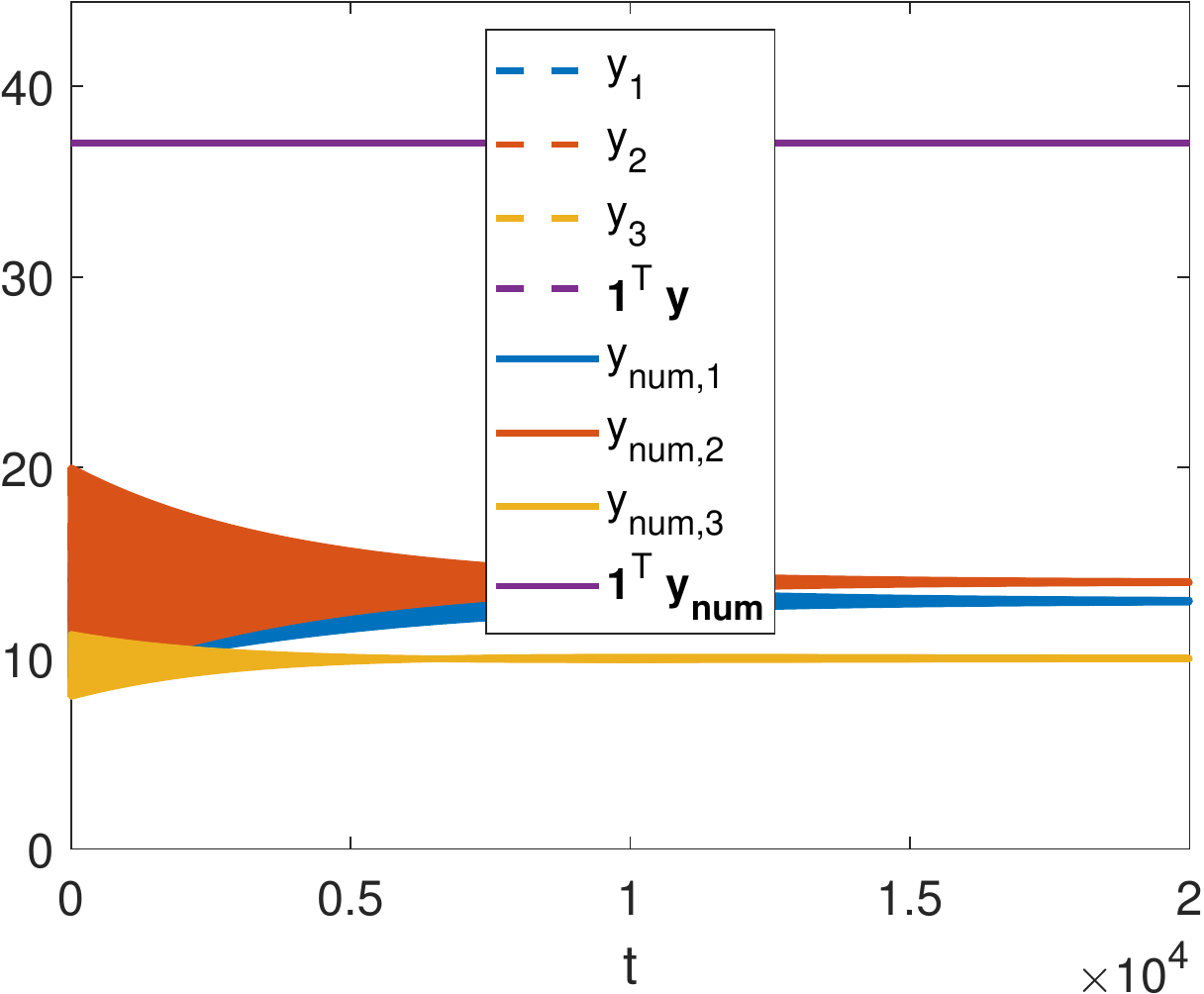}
\subcaption{$\alpha=0.5$}
\end{subfigure}
\caption{Numerical approximations of \eqref{eq:initProbIm} using MPRK22($\alpha$) schemes. The dashed lines indicate the exact solution \eqref{eq:exsolIm}.}\label{Fig:MPRKinitProbIm}
\end{figure}

\begin{figure}[!h]
\begin{subfigure}[t]{0.495\textwidth}
\includegraphics[width=\textwidth]{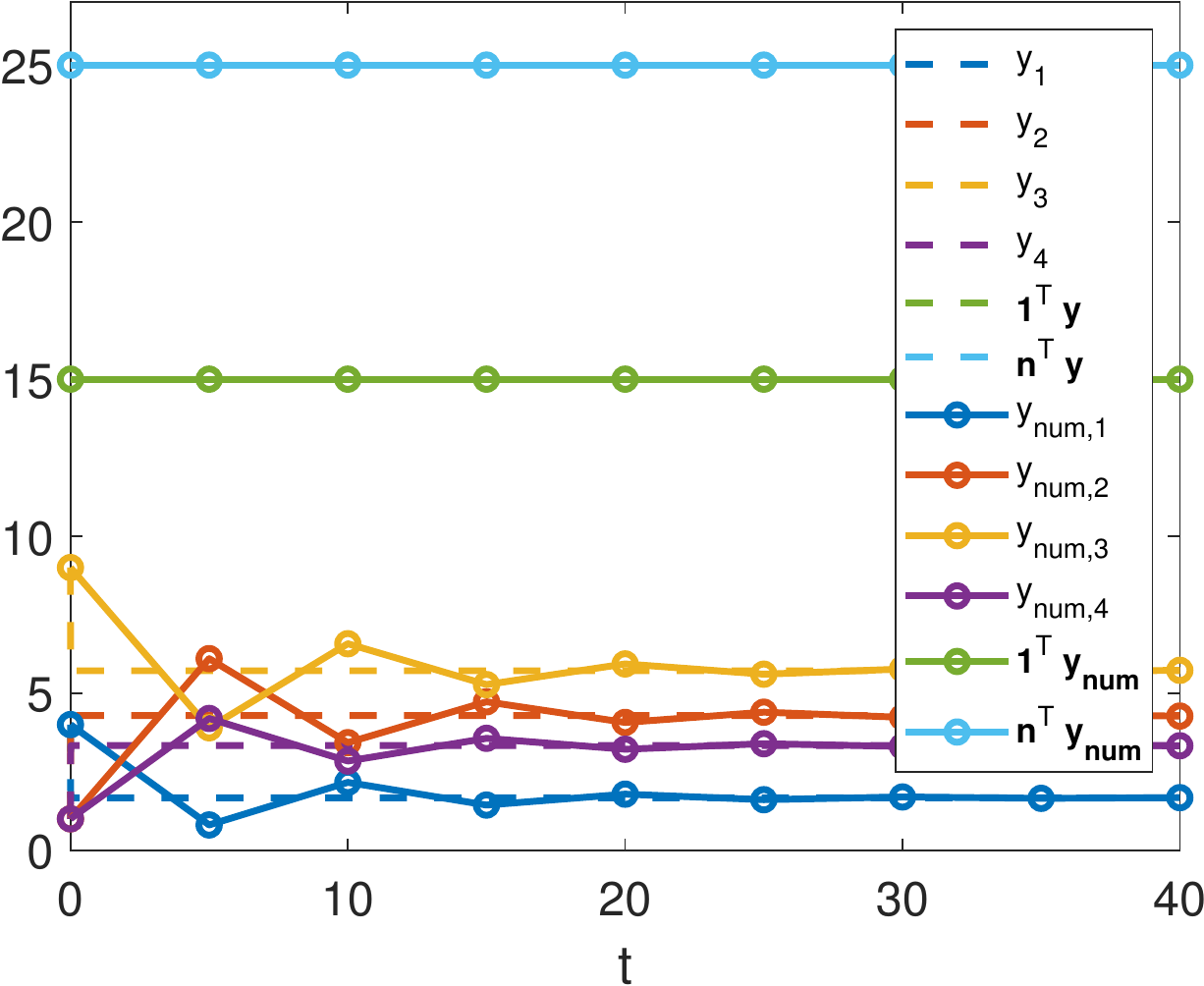}
\subcaption{$\alpha=1$}
\end{subfigure}
\begin{subfigure}[t]{0.495\textwidth}
\includegraphics[width=\textwidth]{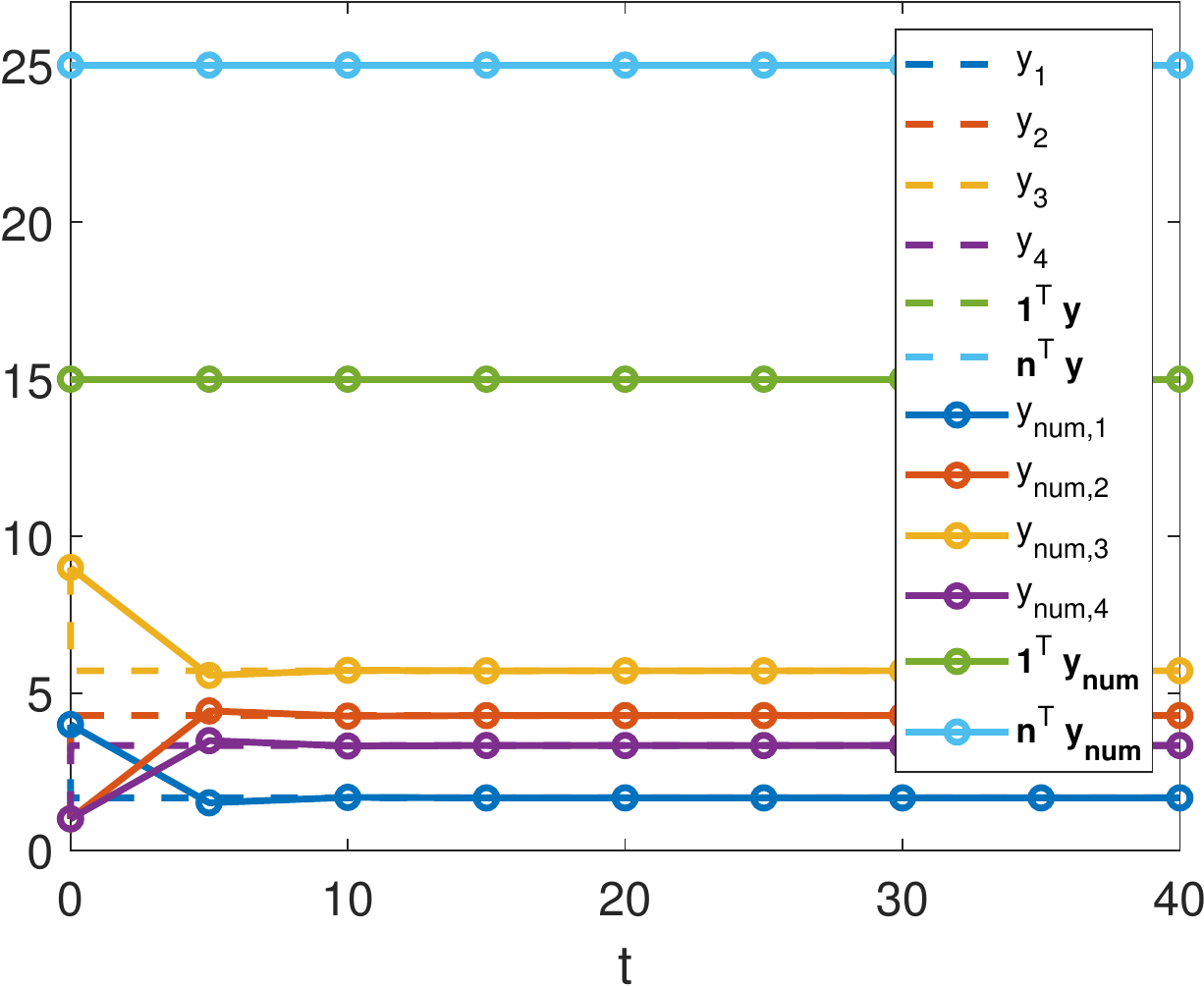}
\subcaption{$\alpha=5$}
\end{subfigure}\\
\begin{subfigure}[t]{0.495\textwidth}
\includegraphics[width=\textwidth]{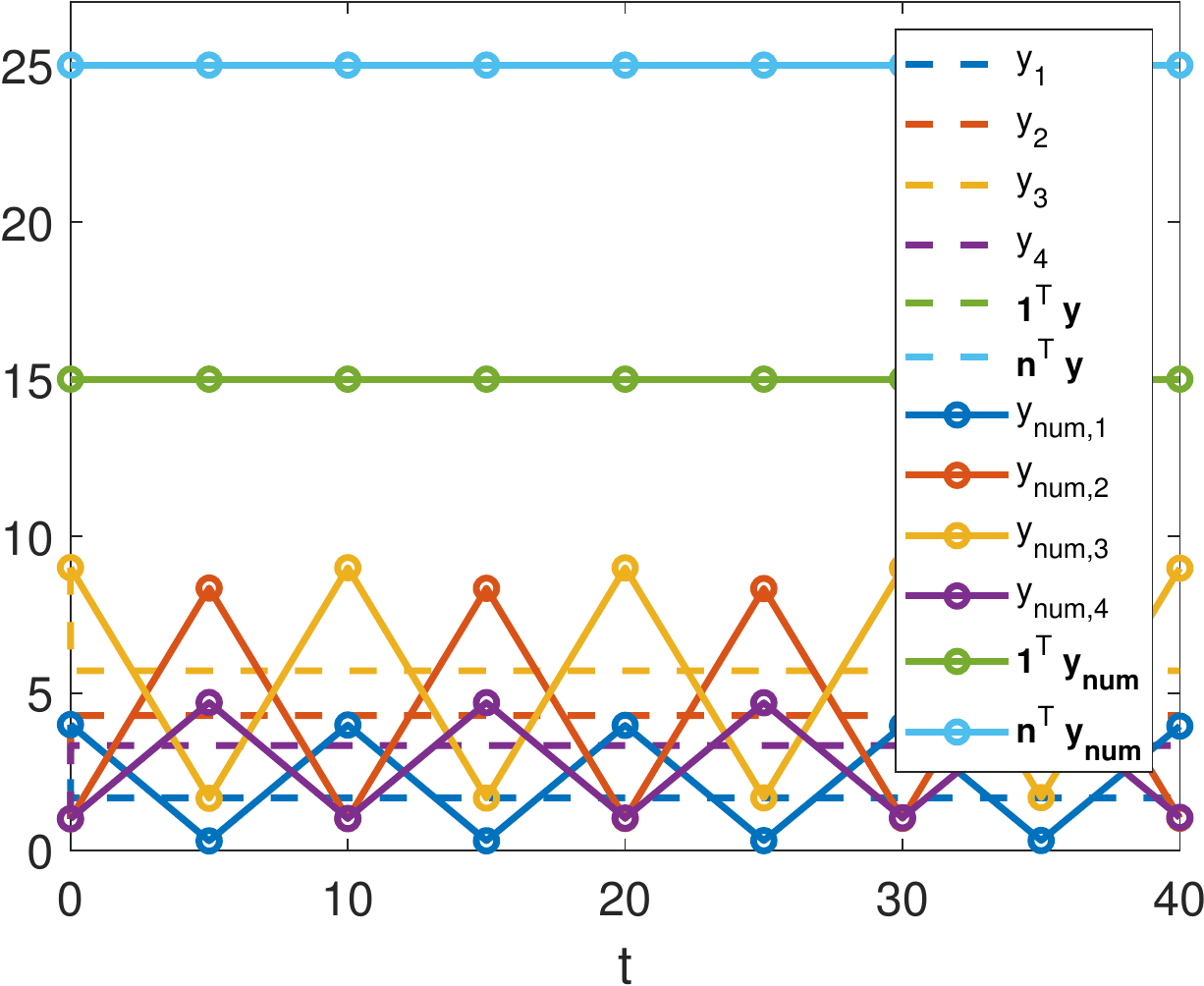}
\subcaption{$\alpha=0.5$}
\end{subfigure}
\begin{subfigure}[t]{0.495\textwidth}
\includegraphics[width=\textwidth]{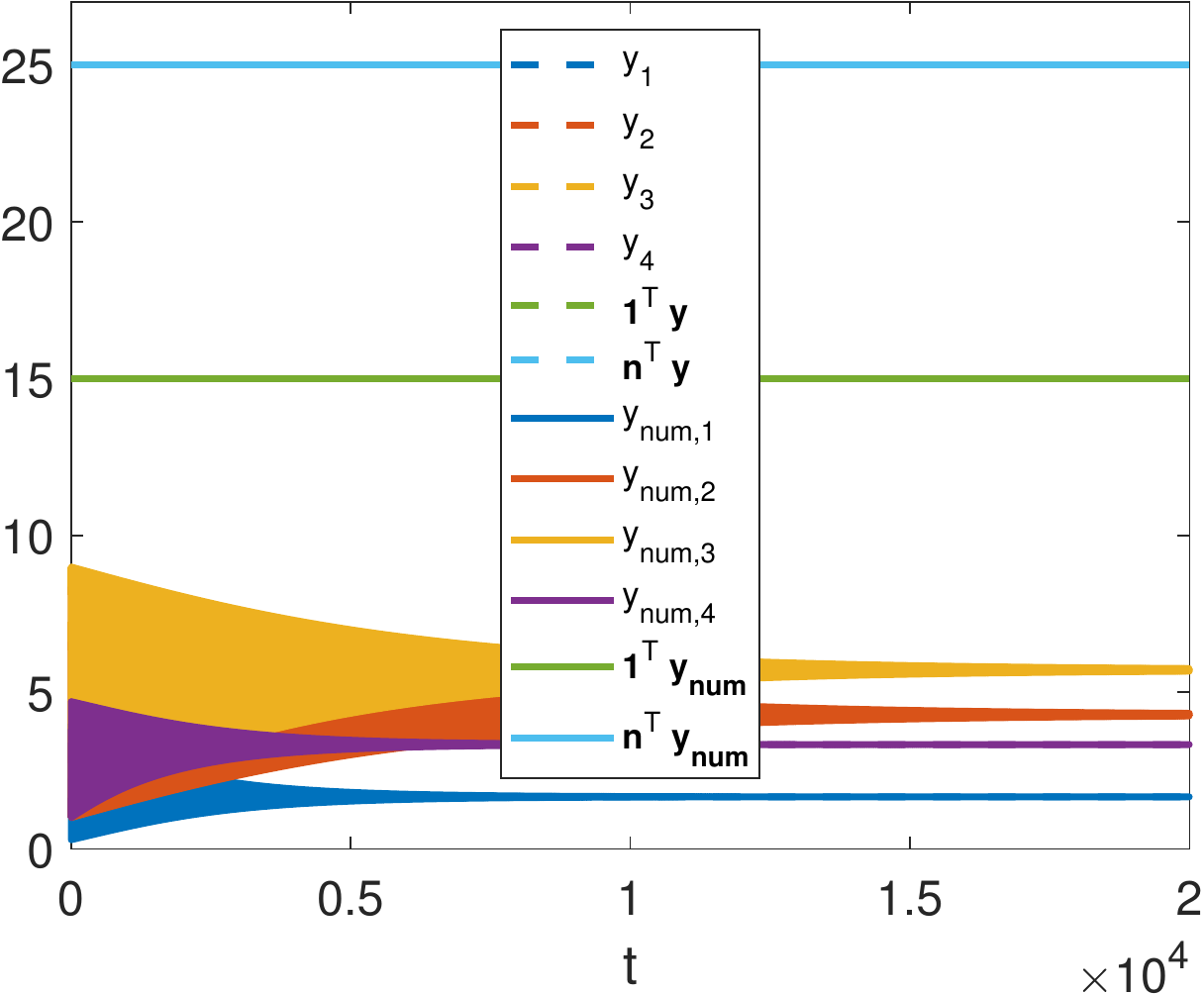}
\subcaption{$\alpha=0.5$}
\end{subfigure}
\caption{Numerical approximations of \eqref{eq:initProb4dim} using MPRK22($\alpha$) schemes. The dashed lines indicate the exact solution \eqref{eq:exsol4dim} and $\bn=(1,2,2,1)^T$.}\label{Fig:MPRKinitProb4dim}
\end{figure}
\section{Summary and Perspectives}
In this paper we generalized the stability analysis from \cite{IKM2122} to an analysis for general time integration schemes conserving at least one linear invariant whereby these schemes do not have to belong to the class of general linear methods. The main result, Theorem \ref{Thm_MPRK_stabil}, gives sufficient conditions for the stability of the methods as well as their local convergence of the iterates to the steady state of the underlying initial value problem. 

The analysis of the second order MPRK22($\alpha$) schemes applied to arbitrary linear systems revealed that the schemes satisfy the conditions to ensure stability and local convergence, which we confirmed with numerical experiments.

Theorem \ref{Thm_MPRK_stabil} opens up the possibility to investigate the stability and local convergence of a wide variety of numerical methods when applied to linear as well as nonlinear systems of differential equations. In particular, the analysis of  GeCo \cite{MR4087156} and BBKS \cite{BBKS2007,BRBM2008,MR4109346} schemes is still of interest. Additionally, a global stability analysis is a future research topic. 

\section{Acknowledgements}
The author Th.\ Izgin gratefully acknowledges the financial support by the Deutsche Forschungsgemeinschaft (DFG) through grant ME 1889/10-1.

\bibliographystyle{plain} 
\bibliography{cas-refs}
\end{document}